\theoremstyle{definition}
\newtheorem{defin}{Definition}[section]
\newtheorem{ejem}{Example}[section]
\theoremstyle{plain}
\newtheorem{teor}{Theorem}[section]
\newtheorem{lem}{Lemma}[section]
\newtheorem{prop}{Proposition}[section]
\newtheorem{cor}{Corollary}[section]
\def\col{\mathop{\rm col}\nolimits}
\DeclareMathOperator{\wt}{wt}
\definecolor{lime}{HTML}{A6CE39}
\DeclareRobustCommand{\orcidicon}{%
	\begin{tikzpicture}
	\draw[lime, fill=lime] (0,0)
	circle [radius=0.16]
	node[white] {{\fontfamily{qag}\selectfont \tiny ID}};
	\draw[white, fill=white] (-0.0625,0.095)
	circle [radius=0.007];
	\end{tikzpicture}
	\hspace{-2mm}
}
\xdef\csname orcid\x\endcsname{\noexpand\href{https://orcid.org/\csname orcidauthor\x\endcsname}{\noexpand\orcidicon}}
\title{$S_h$-sets and linear codes over $\mathbb{F}_q$}
\author[V. Guerrero]{Viviana Carolina Guerrero Pantoja\orcidA{}}
\address{Viviana Guerrero, Departamento de Matem\'aticas, Universidad del Cauca}
\email{vivianagp@unicauca.edu.co}
\author[J.H. Castillo]{John H. Castillo\orcidB{}}
\address{John H. Castillo, Departamento de Matem\'aticas y Estad\'istica, Universidad de Nari\~no}
\email{jhcastillo@udenar.edu.co}
\author[C.A. Trujillo]{Carlos Alberto Trujillo Solarte\orcidC{}}
\address{Carlos Alberto Trujillo Solarte, Departamento de Matem\'aticas, Universidad del Cauca}
\email{trujillo@unicauca.edu.co}
\keywords{Linear code, $S_{h}$-set, $S_{h}$-linear set, $h$-linear combination.}
\subjclass[2020]{11B13, 94B05, 94B65}
\begin{document}

 \noindent
\bibliographystyle{plain}

\renewcommand{\refname}{\textbf{References}}
\thispagestyle{empty}


\begin{abstract}
Let $(G,+)$ be an Abelian group. Given $h\in \mathbb{Z}^+$, a non-empty subset $A$ of $G$ is called an $S_h$-set if all the sums of $h$  distinct elements of $A$ are different. We extend the concept of $S_h$-set to a more general context in the setting of finite vector spaces over finite fields. More precisely, $\emptyset \neq A\subseteq \mathbb{F}_q^r$ is called an $S_h$-linear set if all linear combinations of $h$ elements of $A$ are different. We establish a correspondence between $q$-ary linear codes and $S_h$-linear sets. This connection allow us to find lower bounds for the maximum size of $S_h$-sets in $\mathbb{F}_q^r$.
\end{abstract}

\maketitle

\section{Introduction}
The origin of coding theory is closely tied to the seminal work of the American electrical engineer, mathematician and computer scientist Claude E. Shannon; his article ``A mathematical theory of communication" \cite{SH} originated both coding theory and information theory. The main objective of error-correcting codes is to construct codes that allow the transmission of the maximum possible information, detect errors produced during transmission, and correct them. However, Shannon's article did not contain an explicit construction of such codes. Richard W. Hamming \cite{hamming1950error} and Marcel Golay \cite{Golay49} were potentially the pioneers in providing explicit formulations of codes. Since then, various mathematical techniques have been employed for this purpose, resulting in different families of codes, such as block codes, linear codes, cyclic codes, among others. Several branches of mathematics are involved in the construction and study of these codes, including linear algebra, group theory, ring theory, finite fields theory, module theory, combinatorics, number theory, etc.

The relationship between coding theory and additive number theory was first proposed by R.L. Graham and N.J.A. Sloane in 1980 in their article ``Lower bounds for constant weight codes" \cite{GS}, where they relate $S_{h}$-sets and constant weight binary codes. In 1999, G. Cohen and G. Zémor, in  ``Subset and Coding Theory" \cite{CZ}, presented how coding theory techniques can be used to solve problems in additive number theory. This is achieved by associating a linear code $C(S)\subseteq \mathbb{F}_2^{n}$ with a generating set $S$ of $\mathbb{F}_2^{r}$ with $|S|=n$. Through this relationship, they demonstrate how four additive problems in the Abelian group 
$\mathbb{F}_2^{r}$ can be expressed as coding theory problems, and using their techniques, they present original contributions. In \cite{CZ}, it is mentioned that not every code $\mathcal{C}$ is necessarily a code $C(S)$ for some set $S$; more precisely, they stated that for any code $\mathcal{C}$, there exists a set $S$ that does not contain the zero vector $\boldsymbol{0}$ such that $\mathcal{C}=C(S)$ if and only if the minimum distance of $\mathcal{C}$ is greater or equal than $3$.

Later, G. Cohen, S. Litsyn, and G. Z\'emor, in \cite{CLZ}, study the $S_{2}$-sets in $\mathbb{F}_{2}^{r}$ using an associated code to determine the maximum number of elements that an $S_{2}$-set can have. Subsequently, H. Derksen in \cite{D} revisited the ideas addressed in \cite{GS}, constructing new constant weight binary codes and from them new non-linear binary codes.

Thereafter, H. Haanp\"{a}\"{a} and P. \"{O}sterg\'ard in \cite{HO} demonstrate a one-to-one correspondence between $[n,n-r,5]$-binary linear codes and $S_{2}$-sets  of size $n+1$ in $\mathbb{F}_{2}^{r}$. Afterward, C. Gómez and C. Trujillo in \cite{GT} generalize the result of \cite{HO}, extending the correspondence to $S_{h}$-sets. More precisely, they proved that there exists an $[n,k,d]$-binary linear code with $d\geq 2h+1$ if and only if there exists an $S_h$-set with size $n+1$ in $\mathbb{F}_{2}^{n-k}$, where $2h\leq n-k$.   Recently, I. Czerwinski and A. Pott in \cite{CP2024} revisited the ideas of G. Cohen and G. Zémor in \cite{CZ} and demonstrated a result equivalent to the one shown in \cite{HO}.

This article is organized as follows. In the first section, we give some definitions, properties, and we recall some well known results in coding theory. In the second one, we introduce the concept of $S_h$-linear set in finite vector spaces. We give some properties and examples of $S_h$-linear sets and prove that this is a natural extension of the concept of $S_h$-set. Moreover, in this section we give our main result, see Theorem \ref{principalteor}. Finally, in the last section, we give some consequences of our mains results.
 
\section{Preliminaries}
Let $\mathbb{F}_q$ be the finite field with $q$ elements, where $q$ is a power of a prime number and $\mathbb{F}_q^n$ denote the vector space of all $n$-tuples over $\mathbb{F}_q$. An \emph{$[n,k]$-linear code} $\mathcal{C}$ is an $k$-dimensional subspace of $\mathbb{F}_q^n$. We will say that $n$ is the length of $\mathcal{C}$.  It is used to say that $\mathcal{C}$ is a \emph{$q$-linear code}, in particular when $q=2$ or $q=3$ the code is called \emph{binary} or \emph{ternary} linear code, respectively. An element of a $q$-linear code is called a \emph{codeword}. The \emph{Hamming distance}, $d(\boldsymbol{x},\boldsymbol{y})$, between two codewords $\boldsymbol{x}=(x_1,\ldots,x_n), \boldsymbol{y}=(y_1,\ldots,y_n)\in \mathcal{C}\subseteq\mathbb{F}_q^n$ is the number of entries where they differ, or equivalently, $d(\boldsymbol{x},\boldsymbol{y}) =|\{i:x_i\neq y_i, ~1\leq i\leq n\}|$.

For $\boldsymbol{x}\in\mathbb{F}_q^n$, the \emph{Hamming weight} of $\boldsymbol{x}$ is $\wt(\boldsymbol{x})=d(\boldsymbol{x},\boldsymbol{0})$, i.e., $\wt(\boldsymbol{x})$ is the number of non-zero coordinates in $\boldsymbol{x}$. The \emph{minimum distance} $d(\mathcal{C})=d$ of a linear code $\mathcal{C}$ is defined as the minimum weight among all non-zero codewords, thus we called it an \emph{$[n,k,d]$-linear
code}. A \emph{generator matrix} for an $[n, k]$-linear code $\mathcal{C}$ is any $k\times n$ matrix $G$ whose rows form a basis of the vector subspace $\mathcal{C}$. Thus, the code $\mathcal{C}$ can be seen as $\mathcal{C}=\{ \boldsymbol{x}G: \boldsymbol{x}\in \mathbb{F}_q^k\}$.

Also, as an $[n,k]$-linear code is a subspace of a vector space, it is the kernel of a linear transformation. Hence,
there exists an $(n-k)\times n$ matrix $H$, called a \emph{parity-check matrix} for the $[n, k]$-linear code $\mathcal{C}$, such that $\mathcal{C} =\{\boldsymbol{x} \in \mathbb{F}_q^n:  H\boldsymbol{x}^T = \boldsymbol{0}\}$. We recall, without proof, some classical results that we will use later. 

\begin{teor}[{\cite[Cor. 4.5.7]{LX}}]\label{cojunto-LI-H}
If $H$ is a parity-check matrix of a code $\mathcal{C}$ with length $n$, then $\mathcal{C}$ has minimum distance $d$ if and only if any set with $d-1$ columns of $H$ is a linearly independent set, and there exists a set with $d$ columns of $H$ that is linearly dependent.
\end{teor}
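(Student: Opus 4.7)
The plan is to translate the statement about the minimum distance of $\mathcal{C}$ into a statement about dependencies among columns of $H$, using the obvious dictionary: if $H_1, \ldots, H_n$ denote the columns of $H$, then for $\boldsymbol{x} = (x_1, \ldots, x_n) \in \mathbb{F}_q^n$ we have
\[
H\boldsymbol{x}^T = \sum_{i=1}^n x_i H_i,
\]
so $\boldsymbol{x} \in \mathcal{C}$ if and only if the columns indexed by $\supp(\boldsymbol{x}) = \{i : x_i \neq 0\}$ satisfy a linear dependence relation with coefficients $(x_i)_{i \in \supp(\boldsymbol{x})}$.

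From this I would extract the central observation: there exists a non-zero codeword of Hamming weight exactly $w$ if and only if there exist $w$ columns of $H$ that are linearly dependent (using non-zero coefficients). Indeed, given a non-zero codeword $\boldsymbol{x}$ of weight $w$, the columns $\{H_i : i \in \supp(\boldsymbol{x})\}$ are $w$ columns with a non-trivial dependence. Conversely, any non-trivial dependence $\sum_{i \in I} c_i H_i = \boldsymbol{0}$ with $c_i \neq 0$ and $|I|=w$ yields a codeword supported on $I$ with weight exactly $w$.

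Now I would use the fact that for linear codes the minimum distance equals the minimum weight of a non-zero codeword, since $d(\boldsymbol{x},\boldsymbol{y}) = \wt(\boldsymbol{x}-\boldsymbol{y})$ and $\mathcal{C}$ is closed under subtraction. Combining this with the observation above:
\[
d(\mathcal{C}) \geq d \iff \text{no non-zero codeword has weight} < d \iff \text{every } d-1 \text{ columns of } H \text{ are linearly independent},
\]
and $d(\mathcal{C}) = d$ additionally requires the existence of a non-zero codeword of weight $d$, which is equivalent to the existence of $d$ linearly dependent columns. Putting the two conditions together gives the claimed biconditional.

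The argument is essentially bookkeeping once the column-dependence dictionary is in place; the only subtle point, and thus the mildly careful step, is to make sure the dependence produced from $d$ columns has all coefficients non-zero, so that the resulting codeword actually has weight $d$ rather than something smaller. This is guaranteed by choosing a \emph{minimal} dependent set of columns (any linearly dependent family contains a minimally dependent subfamily, and every coefficient in such a minimal relation must be non-zero); once this is ensured the proof goes through cleanly.
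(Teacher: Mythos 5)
The paper does not prove this statement at all: it is recalled ``without proof'' as a classical result, with a citation to \cite[Cor.~4.5.7]{LX}, so there is no in-paper argument to compare yours against. Your proof is the standard textbook one --- identify codewords with linear dependences among the columns of $H$ via $H\boldsymbol{x}^T=\sum_i x_iH_i$, use that minimum distance equals minimum nonzero weight for a linear code, and handle the weight-exactly-$d$ subtlety by passing to a minimal dependent set of columns (whose size must then be exactly $d$, since any smaller dependent set would extend to a dependent set of $d-1$ columns and contradict the first condition) --- and it is correct.
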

The next result established a relation between the parameters of a linear code, it is known as Singleton bound, see \cite[Thm. 5.4.1]{LX}.
\begin{teor}[Singleton bound]\label{CotaSingleton}
If $\mathcal{C}$ is an $[n, k, d]$-linear code, then $k + d \leq n + 1$.
\end{teor}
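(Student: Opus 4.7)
The plan is to derive the inequality directly from the parity-check characterization of minimum distance just established in Theorem \ref{cojunto-LI-H}. Let $H$ be a parity-check matrix of $\mathcal{C}$; by definition $H$ has size $(n-k)\times n$, so each column of $H$ is a vector in $\mathbb{F}_q^{n-k}$. The ambient space $\mathbb{F}_q^{n-k}$ has dimension $n-k$, hence any linearly independent set of vectors in it has cardinality at most $n-k$.

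By Theorem \ref{cojunto-LI-H}, because $\mathcal{C}$ has minimum distance $d$, every choice of $d-1$ columns of $H$ forms a linearly independent set. Combining this with the dimension bound from the previous paragraph gives $d-1\leq n-k$, which rearranges to $k+d\leq n+1$, as desired.

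I do not expect any real obstacle: the hard work has been absorbed into Theorem \ref{cojunto-LI-H}, and what remains is a one-line dimension count. An alternative route, should one prefer to avoid invoking the parity-check characterization, would be to puncture $\mathcal{C}$ in $d-1$ coordinates and observe that the resulting map $\mathcal{C}\to\mathbb{F}_q^{n-d+1}$ is injective (since two distinct codewords differ in at least $d$ positions, so they cannot agree after deleting only $d-1$ of them), hence $q^k=|\mathcal{C}|\leq q^{n-d+1}$, giving again $k+d\leq n+1$. Either argument yields the bound in a couple of lines.
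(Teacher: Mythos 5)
Your proof is correct. Note, however, that the paper offers no proof of this statement at all: it is recalled ``without proof'' as a classical result and attributed to \cite[Thm.~5.4.1]{LX}, so there is no internal argument to compare against. Of the two routes you sketch, the first is the more natural one in the context of this paper, since it leans on Theorem~\ref{cojunto-LI-H}, which the authors do state explicitly: the columns of an $(n-k)\times n$ parity-check matrix live in $\mathbb{F}_q^{n-k}$, any $d-1$ of them are linearly independent, and a linearly independent set in a space of dimension $n-k$ has at most $n-k$ elements, whence $d-1\leq n-k$. (The degenerate case $k=n$, where $H$ has no rows and $d=1$, passes vacuously.) Your alternative puncturing argument is the standard textbook proof and is also correct, with the advantage that it works for arbitrary, not necessarily linear, codes; but it is independent of the machinery the paper sets up. Either argument would serve as a self-contained justification if the authors wished to include one.
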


Let $\langle G, +\rangle$ be an Abelian group and $h\in \mathbb{Z}^+$. A subset $S$ of $G$, where $|S| = k$, is an $S_h$-set of size $k$ if all sums of $h$ different elements in $S$ are distinct in $G$, i.e., if all the expressions $x_{i_1}+x_{i_2}+\cdots+x_{i_h},$
with $i_1<i_2<\cdots<i_h$ and $x_{i_1},x_{i_2},\ldots,x_{i_h}\in S$, generate different elements of $G$. It is clear that every subset of $G$ is an $S_1$-set.  Besides, a non-empty subset $S$ of an Abelian group $(G,+)$ is a \emph{Sidon set} see \cite{Sidon1932,sidon1935fourier} or a \emph{$B_2$-sequence}, see \cite{Sidon1932,sidon1935fourier}, if $a + b = c + d, a, b, c, d \in S$ imply $\{a, b\} = \{c, d\}$.  Note that in the concept of Sidon sets, repetitions in the terms of the sum are allowed, unlike in an $S_2$-set. However, when defined over $\mathbb{F}_2$, the notions of a Sidon set and an $S_2$-set are equivalent, since in this field repetitions are not possible.

\section{A correspondence between $q$-linear codes and $S_h$-sets}\label{sec3}

In this section,  we obtain a generalization to finite fields $\mathbb{F}_q$ ($q\geq 2$) of a relation between $S_h$-sets and binary linear codes given by C. Gómez and C. Trujillo in \cite{GT}. Firstly, we introduce the concept of $S_{h}$-linear set and prove some of its properties. In particular, it is established that a linear code $\mathcal{C}$ with $d(\mathcal{C})\geq 2h+1$ over $\mathbb{F}_q$ is associated with an $S_{h}$-set in $\mathbb{F}_{q}^{r}$. 

\begin{defin}[$h$-linear combination]\label{def:h_linear}
	Let $A$ be a non-empty subset of a finite vector space $V$ over  $\mathbb{F}_q$ and $h\leq |A|$ be a positive integer. An \emph{$h$-linear combination of $A$} is a linear combination of $h$ distinct elements from $A$. In other words, an  $h$-linear combination of $A$ is an expression of the form 
	\begin{equation}\label{eq:h_linear}
		\lambda_{1}\boldsymbol{a}_{1}+\lambda_{2}\boldsymbol{a}_{2}+\dots+\lambda_{h}\boldsymbol{a}_{h}, \text{ where } \lambda_{i}\in \mathbb{F}_q^*\text{ and } \boldsymbol{a}_{i}\in A.
	\end{equation}
\end{defin}

\begin{defin}[$S_{h}$-linear set]\label{sh_linear_set}
	Let $A$ be a non-empty subset of a finite vector space $V$ over $\mathbb{F}_q$ and $h\leq |A|$ be a positive integer. We say that $A$ is an \emph{$S_{h}$-linear set on $V$}, if all $h$-linear combinations of elements from $A$, omitting permutations of the summands, yield distinct elements in $V$. In other words, $A$ is an $S_{h}$-linear set if all expressions of the form
	\begin{equation}\label{eq:h_linear_comb}
		\lambda_{1}\boldsymbol{x}_{i_1} + \lambda_{2}\boldsymbol{x}_{i_2} + \cdots + \lambda_{h}\boldsymbol{x}_{i_h}, \, \text{ with } \, i_1 < i_2 < \cdots < i_h,
	\end{equation}
	where $\lambda_{1},\dots,\lambda_{h}\in \mathbb{F}_q^*$ and $\boldsymbol{x}_{i_1},\,\boldsymbol{x}_{i_2} ,\, \ldots ,\, \boldsymbol{x}_{i_h} \in A$, produce distinct elements in $V$. 
\end{defin}
In this manuscript, we consider only the study of $S_h$-linear sets on the vector space $\mathbb{F}_q^r$. Note that if all scalars in the expression \eqref{eq:h_linear}  are equal to 1, we obtain the concepts of weak $h$-sum and $S_{h}$-set studied by C. G\'omez and C. Trujillo in \cite{GT}. Since that if $A$ is an $S_h$-linear set, all the sums (with coefficients equal to 1) of $h$ elements of $A$ are also different, then $A$ is also an $S_h$-set. Observe that these two concepts are equivalent when the scalars are taking from $\mathbb{F}_2$. However, it is not true on $\mathbb{F}_q$, with $q\neq 2$.

\begin{ejem}
 The set $S=\{(2,0,0,0,0),(1,2,1,1,0),(2,2,1,2,1),(0,0,0,2,2)\}$ is an $S_{2}$-set in $\mathbb{F}_{3}^{5}$, but it is not an $S_2$-linear set, since that
 $$(2,0,0,0,0)+2(1,2,1,1,0)=2(2,2,1,2,1)+2(0,0,0,2,2).$$
\end{ejem}

\begin{lem}\label{lema:LIset}
	Let $A$ be a non-empty subset of a finite vector space $V$ over a field $\mathbb{F}_q$. If $A$ is a linearly independent set, then $A$ is an $S_h$-linear set,  for all $1\leq h\leq |A|$.
\end{lem}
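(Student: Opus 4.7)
The plan is to prove the contrapositive by a direct appeal to linear independence. Write $A=\{\boldsymbol{x}_1,\dots,\boldsymbol{x}_n\}$ and suppose two $h$-linear combinations coincide:
\[
\lambda_{1}\boldsymbol{x}_{i_1}+\cdots+\lambda_{h}\boldsymbol{x}_{i_h}=\mu_{1}\boldsymbol{x}_{j_1}+\cdots+\mu_{h}\boldsymbol{x}_{j_h},
\]
with $i_1<\cdots<i_h$, $j_1<\cdots<j_h$, and $\lambda_k,\mu_k\in\mathbb{F}_q^{*}$. I would move everything to the left and regroup the terms according to the union of index sets $I=\{i_1,\dots,i_h\}\cup\{j_1,\dots,j_h\}$, so that the equality takes the form of a single linear combination of distinct vectors of $A$ equal to $\boldsymbol{0}$.

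Next, I would read off the coefficients: for an index lying only in $\{i_1,\dots,i_h\}$ the coefficient is $\lambda_k$, for one lying only in $\{j_1,\dots,j_h\}$ it is $-\mu_{k'}$, and for a shared index it is $\lambda_k-\mu_{k'}$. Since $A$ is linearly independent, every one of these coefficients must vanish. Because each $\lambda_k,\mu_{k'}$ lies in $\mathbb{F}_q^{*}$, the first two types cannot actually occur, which forces $\{i_1,\dots,i_h\}=\{j_1,\dots,j_h\}$, and the third type then yields $\lambda_k=\mu_{k'}$ whenever $i_k=j_{k'}$. Combined with the ordering $i_1<\cdots<i_h$ and $j_1<\cdots<j_h$, this gives $i_k=j_k$ and $\lambda_k=\mu_k$ for every $k$, so the two $h$-linear combinations are identical in the sense of Definition \ref{sh_linear_set}.

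There is no real obstacle; the only bookkeeping point worth stressing is that the two combinations may a priori involve different subsets of $A$, which is why the coefficients of the regrouped equation have three possible shapes. Once that is clearly written down, linear independence closes the argument in one line, and the conclusion holds for every $h$ with $1\le h\le|A|$.
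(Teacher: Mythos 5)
Your proposal is correct and follows essentially the same route as the paper: subtract the two coinciding $h$-linear combinations, regroup into a single linear combination of distinct elements of $A$, and let linear independence force every coefficient to vanish. The only difference is organizational — you handle the bookkeeping in one pass via the union of index sets with three coefficient shapes, whereas the paper splits into three cases according to how the two families of summands overlap; your version is, if anything, slightly more careful about the possibility that a vector from one side matches a differently-indexed vector on the other.
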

\begin{proof}
	Assume that there are two $h$-linear combinations of $A$ that are equal in $V$, i.e.,
	\begin{equation*}
		\lambda_{1}\boldsymbol{a}_{1}+\lambda_{2}\boldsymbol{a}_{2}+\dots+\lambda_{h}\boldsymbol{a}_{h}=\beta_{1}\boldsymbol{b}_{1}+\beta_{2}\boldsymbol{b}_{2}+\dots+\beta_{h}\boldsymbol{b}_{h},
	\end{equation*}
	hence
	\begin{equation}\label{eq:lemliset-shset}
		\lambda_{1}\boldsymbol{a}_{1}+\lambda_{2}\boldsymbol{a}_{2}+\dots+\lambda_{h}\boldsymbol{a}_{h}-\beta_{1}\boldsymbol{b}_{1}-\beta_{2}\boldsymbol{b}_{2}-\dots-\beta_{h}\boldsymbol{b}_{h}=\boldsymbol{0},
	\end{equation}
	We study the following cases:
	\begin{enumerate}[\normalfont(i)]  
		\item $\boldsymbol{a}_{i}=\boldsymbol{b}_{i}$ for all $i$. From \eqref{eq:lemliset-shset} 
		\begin{equation*}
			(\lambda_{1}-\beta_{1})\boldsymbol{a}_{1}+(\lambda_{2}-\beta_{2})\boldsymbol{a}_{2}+\dots+(\lambda_{h}-\beta_{h})\boldsymbol{a}_{h}=\boldsymbol{0}.
		\end{equation*}
		Since $A$ is a linearly independent set, this implies that $\lambda_i=\beta_i$, which is a contradiction.
		\item $\boldsymbol{a}_{i}\neq \boldsymbol{b}_{i}$ for all $i$. Again, from \eqref{eq:lemliset-shset} and our assumption we get that $\lambda_i=\beta_i=0$, an impossible consequence.
		\item Assume that $\boldsymbol{a}_{i}=\boldsymbol{b}_{i}$ for some $i$. Let $\emptyset\neq I=\{i:\boldsymbol{a}_{i}=\boldsymbol{b}_{i} \}\subsetneq \{1,\ldots, h\}$. Then \eqref{eq:lemliset-shset} can be transformed in 
		\begin{equation*}
			\sum_{i\in I}(\lambda_{i}-\beta_{i})\boldsymbol{a}_{i}+\sum_{j\notin I}(\lambda_{j}\boldsymbol{a}_{j}-\beta_{j}\boldsymbol{b}_{j})=\boldsymbol{0}.
		\end{equation*}
		As $A$ is linearly independent, from the last expression we get that $\lambda_j=\beta_j=0$ for all $j\notin I$, again a contradiction of Definition \ref{def:h_linear}.
	\end{enumerate}
	Therefore, $A$ is an $S_h$-linear set. 
\end{proof}
It should be noted that, although $A$ may be an $S_h$-linear set for some $h$, this does not implies that $A$ is linearly independent. In the sequel, denote with $\boldsymbol{e}_i$ the canonical vector of $\mathbb{F}_q^n$. Let $h\geq 2$ be a positive integer, we set $\overline{h}A$ the set of all $h$-linear combinations of $A$, i.e.,
\begin{equation}
\overline{h}A=\{\lambda_{1}\boldsymbol{a}_{1}+\lambda_{2}\boldsymbol{a}_{2}+\dots+\lambda_{h}\boldsymbol{a}_{h}: \lambda_{i}\in \mathbb{F}_q^*\text{ and } \boldsymbol{a}_{i}\in A\}.
\label{eq:}
\end{equation}

\begin{ejem}\quad
\begin{enumerate}
\item The converse of Lemma \ref{lema:LIset} is not true. In fact, for $$A=\{(0,0,0),(1,1,0),(0,1,0)\}\subset\mathbb{F}_3^3,$$ we obtain that
\begin{align*}
\overline{2}A&=\{(0,0,0)+(1,1,0),(0,0,0)+(0,1,0),(1,1,0)+(0,1,0),\\
&\qquad(0,0,0)+2(1,1,0),(0,0,0)+2(0,1,0),2(1,1,0)+(0,1,0),\\
&\qquad(1,1,0)+2(0,1,0),2(1,1,0)+2(0,1,0)\}\\
&=\{(1,1,0),(0,1,0),(1,2,0),(2,2,0),(0,2,0),(2,0,0),(1,0,0),(2,1,0)\},\\
\overline{3}A&=\{(0,0,0)+(1,1,0)+(0,1,0),(0,0,0)+2(1,1,0)+(0,1,0),\\
&\qquad(0,0,0)+(1,1,0)+2(0,1,0),(0,0,0)+2(1,1,0)+2(0,1,0)\},\\
&=\{(1,2,0),(2,0,0),(1,0,0),(2,1,0)\}.
\end{align*}
Thus $A$ is an $S_h$-linear for all $1\leq h\leq 3$, but clearly is a linearly dependent set.
\item Consider $A\subseteq\mathbb{F}_{2}^{10}$ given by
	\begin{equation*} 
		\begin{split}
			A=&\{\boldsymbol{e}_{1}, \boldsymbol{e}_{2}, \boldsymbol{e}_{10}, \boldsymbol{e}_{1}+\boldsymbol{e}_{3}, \boldsymbol{e}_{2}+\boldsymbol{e}_{4}, \boldsymbol{e}_{8}+\boldsymbol{e}_{9}, \boldsymbol{e}_{9}+\boldsymbol{e}_{10}, \boldsymbol{e}_{1}+\boldsymbol{e}_{3}+\boldsymbol{e}_{5},
			\boldsymbol{e}_{6}+\boldsymbol{e}_{7}+\boldsymbol{e}_{9},\\ 
			&\,\,\boldsymbol{e}_{7}+\boldsymbol{e}_{8}+\boldsymbol{e}_{10}, \boldsymbol{e}_{1}+\boldsymbol{e}_{2}+\boldsymbol{e}_{4}+\boldsymbol{e}_{6}, \boldsymbol{e}_{2}+\boldsymbol{e}_{3}+\boldsymbol{e}_{5}+\boldsymbol{e}_{7},
			\boldsymbol{e}_{3}+\boldsymbol{e}_{4}+\boldsymbol{e}_{6}+\boldsymbol{e}_{8},\\ 
			&\,\,\boldsymbol{e}_{4}+\boldsymbol{e}_{5}+\boldsymbol{e}_{7}+\boldsymbol{e}_{9}, \boldsymbol{e}_{5}+\boldsymbol{e}_{6}+\boldsymbol{e}_{8}+\boldsymbol{e}_{10} \}.
		\end{split}
	\end{equation*}
	It can be verified that $A$ is an $S_{3}$-linear set, but $A$ is a linearly dependent set, since that  
    \begin{align*}\boldsymbol{e}_{9}+\boldsymbol{e}_{10}=~&\boldsymbol{e}_{1}+\boldsymbol{e}_{2}+(\boldsymbol{e}_{1}+\boldsymbol{e}_{3})+(\boldsymbol{e}_{2}+\boldsymbol{e}_{4})+(\boldsymbol{e}_{3}+\boldsymbol{e}_{4}+\boldsymbol{e}_{6}+\boldsymbol{e}_{8})\\
    &+(\boldsymbol{e}_{6}+\boldsymbol{e}_{7}+\boldsymbol{e}_{9})+(\boldsymbol{e}_{7}+\boldsymbol{e}_{8}+\boldsymbol{e}_{10}).
    \end{align*}
However, $A$ is not an $S_4$-linear set, because 
\begin{align*}
\boldsymbol{e}_{1}+\boldsymbol{e}_{2}+\boldsymbol{e}_{10}+(\boldsymbol{e}_{8}+\boldsymbol{e}_{9})=~&(\boldsymbol{e}_{1}+\boldsymbol{e}_{3})+(\boldsymbol{e}_{6}+\boldsymbol{e}_{7}+\boldsymbol{e}_{9})\\
&+(\boldsymbol{e}_{2}+\boldsymbol{e}_{3}+\boldsymbol{e}_{5}+\boldsymbol{e}_{7})+(\boldsymbol{e}_{5}+\boldsymbol{e}_{6}+\boldsymbol{e}_{8}+\boldsymbol{e}_{10}).
\end{align*}

\end{enumerate}
\end{ejem}

In the next, we give some properties of $S_h$-linear sets. 

\begin{prop}
Let $h\geq 2$ be an integer and $A$ a subset of $\mathbb{F}_q^n$. Then 

\begin{enumerate}[\normalfont(i)]
	\item \label{part1} $A$ is an $S_h$-linear set if and only if
    
    \begin{equation}
	|\overline{h}A|= \begin{cases}
(q-1)^h \binom {|A|}{h}, & \text{ if $\boldsymbol{0}\notin A$},\\[0.25cm]
	(q-1)^{h-1} \binom {|A|-1}{h-1}+(q-1)^h \binom {|A|-1}{h}, &  \text{ if $\boldsymbol{0}\in A$}.
	\end{cases}
	\label{eq:hCL}
\end{equation} 
	\item \label{part2} If $A$ is an $S_h$-linear set and $q\neq 2$, then $\overline{h}A \cap \overline{t}A=\emptyset$, for all $1\leq t\leq h-1$.

	\item \label{part3} If $A$ is an $S_h$-linear set, then 
	\begin{equation}
	|A|< \begin{cases}
	\dfrac{\sqrt[h]{q^nh!}}{q-1} + (h-1), & \text{ if $\boldsymbol{0}\notin A$},\\[0.25cm]
	\sqrt[h]{\dfrac{q^nh!}{(q-1)^{h-1}}} + (h-1), &  \text{ if $\boldsymbol{0}\in A$}.
	\end{cases}
	\label{eq:contention0}
\end{equation}
\end{enumerate}

\end{prop}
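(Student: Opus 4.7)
\textbf{Part (i)} is a pure counting argument. The size of $\overline{h}A$ is bounded above by the number of ``essentially distinct'' $h$-linear combinations, and equality characterises the $S_h$-linear property. If $\boldsymbol{0}\notin A$, each $h$-linear combination is specified by a choice of $h$-subset of $A$ (contributing $\binom{|A|}{h}$) together with an $h$-tuple of nonzero scalars (contributing $(q-1)^h$), so $|\overline{h}A|\leq(q-1)^h\binom{|A|}{h}$, with equality exactly when every such combination produces a different sum. If $\boldsymbol{0}\in A$, I split according to whether $\boldsymbol{0}$ is one of the summands: combinations using $\boldsymbol{0}$ yield the same vector regardless of the coefficient attached to $\boldsymbol{0}$, contributing at most $(q-1)^{h-1}\binom{|A|-1}{h-1}$ distinct elements, while combinations avoiding $\boldsymbol{0}$ contribute at most $(q-1)^h\binom{|A|-1}{h}$. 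Summing gives the stated formula, and saturating the bound is equivalent to $A$ being $S_h$-linear.

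\textbf{Part (ii)} is the technical heart. Assume for contradiction that $\boldsymbol{x}\in\overline{h}A\cap\overline{t}A$, say $\boldsymbol{x}=\sum_{i=1}^h\lambda_i\boldsymbol{a}_i=\sum_{j=1}^t\mu_j\boldsymbol{b}_j$ with $t<h$. The identity $\sum\lambda_i\boldsymbol{a}_i-\sum\mu_j\boldsymbol{b}_j=\boldsymbol{0}$, after merging repeated elements and discarding cancelled terms, produces a nontrivial linear dependency $\sum_k\alpha_k\boldsymbol{d}_k=\boldsymbol{0}$ among distinct $\boldsymbol{d}_k\in A$ with $\alpha_k\in\mathbb{F}_q^*$. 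Solving this dependency for some $\boldsymbol{d}_{k_0}$ and substituting into the original $h$-linear representation of $\boldsymbol{x}$, I would build a second $h$-linear combination of $A$ equal to $\boldsymbol{x}$; choosing the auxiliary scalar from $\mathbb{F}_q^*\setminus\{1\}$ (non-empty precisely because $q\neq 2$) ensures that all coefficients in the new expression stay nonzero and that the underlying multiset of (element, coefficient) pairs genuinely differs from the original, contradicting part (i). I expect the main obstacle to be the book-keeping when $\{\boldsymbol{a}_i\}$ and $\{\boldsymbol{b}_j\}$ overlap: one must split into sub-cases based on the overlap pattern, and in each one select the scalars so as to avoid reproducing the original multiset or creating a zero coefficient. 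In $\mathbb{F}_2$ this freedom disappears, which explains why the hypothesis $q\neq 2$ is indispensable.

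\textbf{Part (iii)} is obtained by combining part (i) with the trivial bound $|\overline{h}A|\leq q^n$. If $\boldsymbol{0}\notin A$, part (i) gives $(q-1)^h\binom{|A|}{h}\leq q^n$, and since $h\geq 2$ the strict estimate
\[
\binom{|A|}{h}=\frac{|A|(|A|-1)\cdots(|A|-h+1)}{h!}>\frac{(|A|-h+1)^h}{h!}
\]
holds, so $(q-1)^h(|A|-h+1)^h<q^nh!$; extracting the $h$-th root yields the first bound. If $\boldsymbol{0}\in A$, Pascal's identity together with $(q-1)^h\geq(q-1)^{h-1}$ gives
\[
(q-1)^{h-1}\binom{|A|-1}{h-1}+(q-1)^h\binom{|A|-1}{h}\geq(q-1)^{h-1}\binom{|A|}{h},
\]
whence $(q-1)^{h-1}\binom{|A|}{h}\leq q^n$, and the same lower estimate on $\binom{|A|}{h}$ produces the second bound.
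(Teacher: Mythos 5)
Parts (i) and (iii) of your proposal coincide with the paper's argument: the same case split on whether $\boldsymbol{0}\in A$, the same observation that the coefficient attached to $\boldsymbol{0}$ is irrelevant, and in (iii) the same estimate $\binom{|A|}{h}>\frac{(|A|-h+1)^h}{h!}$ combined with $|\overline{h}A|\leq q^n$ and Pascal's rule. These are fine.

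Part (ii), however, is a plan rather than a proof, and the plan has a genuine gap. You propose to extract a dependency $\sum_k\alpha_k\boldsymbol{d}_k=\boldsymbol{0}$, solve for some $\boldsymbol{d}_{k_0}$, and substitute back into $\boldsymbol{x}=\sum_{i=1}^h\lambda_i\boldsymbol{a}_i$ to manufacture a second $h$-linear combination equal to $\boldsymbol{x}$. But after substitution the number of distinct summands is uncontrolled: you may end up with more than $h$ elements, or with fewer after coefficients cancel, and there is no guarantee you can steer the result back to \emph{exactly} $h$ distinct elements with all coefficients in $\mathbb{F}_q^*$ --- which is what a contradiction with the $S_h$-linear property requires. (You cannot fall back on ``$A$ is also $S_j$-linear for $j<h$'' either; in the paper that implication is Lemma 3.3, proved later and under the extra hypothesis $2h<r$.) You name this obstacle yourself (``I expect the main obstacle to be the book-keeping\ldots'') but never resolve it, so the argument is incomplete as written. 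The paper avoids the elimination step entirely: starting from $\sum_{i=1}^h\lambda_i\boldsymbol{a}_i=\sum_{j=1}^t\gamma_j\boldsymbol{b}_j$, reindex so that $\boldsymbol{a}_1,\dots,\boldsymbol{a}_{h-t}$ do not occur among the $\boldsymbol{b}_j$ (possible since at most $t$ of the $h$ elements $\boldsymbol{a}_i$ can coincide with some $\boldsymbol{b}_j$), choose $\delta_i\in\mathbb{F}_q^*$ with $\lambda_i+\delta_i\neq 0$ (here is where $q\neq 2$ enters), and add $\sum_{i=1}^{h-t}\delta_i\boldsymbol{a}_i$ to both sides. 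The left side stays an $h$-linear combination with modified coefficients, the right side becomes an $h$-linear combination on $h$ distinct elements, and the two differ because the coefficient of $\boldsymbol{a}_1$ is $\lambda_1+\delta_1$ on one side and $\delta_1$ on the other. I recommend you replace your substitution scheme by this padding argument.
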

\begin{proof}\quad
\begin{enumerate}[\normalfont(i)]  
\item Suppose that $\boldsymbol{0}\notin A$. Note that to construct an element of $\overline{h}A$, we need to choose $h$ elements from $A$, this task can be done in $\binom {|A|}{h}$ different ways. Then for each one of these elements we take a non-zero coefficient; which can be done in $q-1$ different ways.

In the other hand, if $\boldsymbol{0}\in A$, then   in order to construct an element of $\overline{h}A$, one must take into account whether $\boldsymbol{0}$ participates in the $h$-linear combination or not. If it does, then the choice reduces to selecting $h-1$ elements from $A \setminus \{\boldsymbol{0}\}$, yielding $\binom{|A|-1}{h-1}$ possibilities, and assigning to each of them a non-zero coefficient, which can be done in $(q-1)$ ways. Otherwise, when $\boldsymbol{0}$ is not involved, we select $h$ elements from $A \setminus \{\boldsymbol{0}\}$, which gives $\binom{|A|-1}{h}$ possibilities, and again each element is assigned a non-zero coefficient in $(q-1)$ ways.
\item Note that if an $h$-linear combination is equal to a $t$-linear combination, we can complete the last one to be also an $h$-linear combination. Indeed, suppose that for some $\boldsymbol{a}_{i},\boldsymbol{b}_{i}\in A$ and $\lambda_{i},\gamma_{i}\in \mathbb{F}_q^*$,
$$\sum_{i=1}^h\lambda_{i}\boldsymbol{a}_{i}=\sum_{i=1}^t\gamma_{i}\boldsymbol{b}_{i}.$$
Since $q\neq 2$ for each $1\leq i\leq h-t$ we can find $\delta_i\in \mathbb{F}_q^*$ such that $\lambda_{i}+\delta_i\in\mathbb{F}_q^*$. Thus we obtain the equality
$$\sum_{i=1}^{h-t}(\lambda_{i}+\delta_i)\boldsymbol{a}_{i}+\sum_{i=h-t+1}^h\lambda_{i}\boldsymbol{a}_{i}=\sum_{i=1}^{h-t}\delta_i\boldsymbol{a}_{i}+\sum_{i=1}^t\gamma_{i}\boldsymbol{b}_{i}.$$
Therefore, we get two equal $h$-linear combinations from $A$, which is a contradiction.

\item Assume that $\boldsymbol{0}\notin A$. Then by item \eqref{part1}, we get that
\begin{align*}
|\overline{h}A|&=(q-1)^h \binom {|A|}{h}=\dfrac{(q-1)^h|A|!}{h!(|A|-h)!}\\
&=\dfrac{(q-1)^h(|A|-h+1)(|A|-h+2)\cdots |A|}{h!}\\
&>\dfrac{(q-1)^h(|A|-h+1)^h}{h!}.
\end{align*}
Since that $|\overline{h}A|\leq q^n$, we obtain that
\begin{align*}
\dfrac{(q-1)^h(|A|-h+1)^h}{h!}<q^n\\
(|A|-h+1)^h<\dfrac{q^nh!}{(q-1)^h}\\
|A|-h+1<\sqrt[h]{\dfrac{q^nh!}{(q-1)^h}}\\
|A|<\dfrac{\sqrt[h]{q^nh!}}{q-1}+(h-1).
\end{align*}
Now, suppose that $\boldsymbol{0}\in A$, again by \eqref{part1} and the Pascal's rule, we obtain that 
\begin{align*}
q^n\geq |\overline{h}A|&=(q-1)^{h-1} \binom {|A|-1}{h-1}+(q-1)^h \binom {|A|-1}{h}\\
&>(q-1)^{h-1} \left( \binom {|A|-1}{h-1}+ \binom {|A|-1}{h}\right)=(q-1)^{h-1}\binom {|A|}{h}\\
&>\dfrac{(q-1)^{h-1}(|A|-h+1)^h}{h!},
\end{align*}
and the conclusion is obtained as in the previous case.
\end{enumerate}
\end{proof}
Observe that, when $q=h=2$, the items \eqref{part1} and \eqref{part3} do not depend on the fact that $\boldsymbol{0}$ is or not in $A$, i.e. there are obtained the same value for $|\overline{h}A|$ and the same bound for $|A|$, respectively. Furthermore, a better bound for the size of a $S_2$-set in $\mathbb{F}_2^n$ can be found in \cite[Proposition 2.1]{CP2024}. We recall that $\langle A \rangle$ denote the set of all linear combinations of elements from $A$.

\begin{prop}\label{prop:traslacion_afin}
Let $A$ be a non-empty subset of a finite vector space $V$ over $\mathbb{F}_q$, with $q\neq 2$. If $A$ is an $S_h$-linear set, then 
$\boldsymbol{v} +\alpha A $ is an $S_h$-linear set on $V$, for all $\boldsymbol{v}\in V\setminus \langle A \rangle$ and  $\alpha\in \mathbb{F}_q^*$.
\end{prop}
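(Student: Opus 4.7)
The plan is to argue by contradiction, by transferring any supposed coincidence of $h$-linear combinations on $B:=\boldsymbol{v}+\alpha A$ back to a coincidence on $A$ and then invoking the $S_h$-linearity of $A$. To set up, first I would note that, because $\alpha\in\mathbb{F}_q^*$, the affine map $\boldsymbol{a}\mapsto \boldsymbol{v}+\alpha\boldsymbol{a}$ is a bijection $A\to B$, so any $h$ distinct elements of $B$ correspond to $h$ distinct elements of $A$. Assuming $B$ is not $S_h$-linear, there exist two $h$-linear combinations, not related by a permutation of summands, with
\begin{equation*}
\sum_{i=1}^{h}\lambda_i(\boldsymbol{v}+\alpha\boldsymbol{a}_i)=\sum_{j=1}^{h}\beta_j(\boldsymbol{v}+\alpha\boldsymbol{b}_j),
\end{equation*}
where $\lambda_i,\beta_j\in\mathbb{F}_q^*$ and $\{\boldsymbol{a}_i\}_{i=1}^{h}$, $\{\boldsymbol{b}_j\}_{j=1}^{h}$ are $h$-subsets of $A$.

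The next step is to separate the $\boldsymbol{v}$-term from the $A$-terms. Expanding and rearranging gives
\begin{equation*}
\Bigl(\sum_{i=1}^{h}\lambda_i-\sum_{j=1}^{h}\beta_j\Bigr)\boldsymbol{v}=\alpha\Bigl(\sum_{j=1}^{h}\beta_j\boldsymbol{b}_j-\sum_{i=1}^{h}\lambda_i\boldsymbol{a}_i\Bigr).
\end{equation*}
The right-hand side lies in $\langle A\rangle$, while by hypothesis $\boldsymbol{v}\notin\langle A\rangle$. Hence the scalar coefficient on the left must vanish, which forces $\sum_i\lambda_i=\sum_j\beta_j$; substituting back, the right-hand side is zero as well, and since $\alpha\in\mathbb{F}_q^*$ I would conclude
\begin{equation*}
\sum_{i=1}^{h}\lambda_i\boldsymbol{a}_i=\sum_{j=1}^{h}\beta_j\boldsymbol{b}_j,
\end{equation*}
an equality between two $h$-linear combinations on $A$.

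To close the argument, I would invoke the $S_h$-linearity of $A$: the above two combinations must coincide up to a permutation of summands, so after relabeling one has $\boldsymbol{a}_i=\boldsymbol{b}_i$ and $\lambda_i=\beta_i$ for every $i$. Pushing this back through the bijection $\boldsymbol{a}\mapsto\boldsymbol{v}+\alpha\boldsymbol{a}$, the two original $h$-linear combinations in $B$ are equal up to the same permutation, contradicting the initial assumption.

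The main obstacle I foresee is the bookkeeping in this final step, namely making precise that ``equal up to permutation on $A$'' transports back to ``equal up to permutation on $B$''; this is exactly where the injectivity of the affine map is used, and where one must be careful not to conflate a single element of $A$ appearing with different coefficients $\lambda_i\neq\beta_i$ with two different elements. The hypothesis $\boldsymbol{v}\notin\langle A\rangle$ plays its role solely in forcing $\sum_i\lambda_i=\sum_j\beta_j$; this is the only place where the condition is actually needed, and without it the vector $\boldsymbol{v}$ could be absorbed by a nontrivial linear relation from $A$.
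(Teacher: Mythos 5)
Your argument is correct and follows essentially the same route as the paper's proof: expand the two coinciding $h$-linear combinations of $\boldsymbol{v}+\alpha A$, use $\boldsymbol{v}\notin\langle A\rangle$ to force $\sum_i\lambda_i=\sum_j\beta_j$, and thereby reduce to two equal $h$-linear combinations of $A$, contradicting its $S_h$-linearity. Your final bookkeeping step (transporting the permutation-matching back through the bijection $\boldsymbol{a}\mapsto\boldsymbol{v}+\alpha\boldsymbol{a}$) just makes explicit the contradiction the paper leaves implicit.
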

\begin{proof}
Assume that there are two equal $h$-linear combinations of $\boldsymbol{v} +\alpha A$, that is
\begin{equation}
\beta_{1}\boldsymbol{b}_{1}+\beta_{2}\boldsymbol{b}_{2}+\dots+\beta_{h}\boldsymbol{b}_{h}=\lambda_{1}\boldsymbol{c}_{1}+\lambda_{2}\boldsymbol{c}_{2}+\dots+\lambda_{h}\boldsymbol{c}_{h},
\label{eq:affine_proof}
\end{equation}
for some $ \boldsymbol{b}_{i},\boldsymbol{c}_{i}\in \boldsymbol{v} +\alpha A$ and $\lambda_{i}, \beta_{i}\in \mathbb{F}_q^*$. Since that, $\boldsymbol{b}_{i}=\boldsymbol{v}+\alpha \boldsymbol{a}_i$ and $\boldsymbol{c}_{i}=\boldsymbol{v}+\alpha \boldsymbol{a}'_i$, for some $\boldsymbol{a}_i,\boldsymbol{a}'_i\in A$, from \eqref{eq:affine_proof} we get 
\begin{equation}
\sum_{i=1}^h(\beta_i-\lambda_i)\boldsymbol{v}+ \beta'_{1}\boldsymbol{a}_{1}+\beta'_{2}\boldsymbol{a}_{2}+\cdots+\beta'_{h}\boldsymbol{a}_{h}=\lambda'_{1}\boldsymbol{a}'_{1}+\lambda'_{2}\boldsymbol{a}'_{2}+\cdots+\lambda'_{h}\boldsymbol{a}'_{h},
\label{eq:affine_proof2}
\end{equation}
where $\beta_i'=\beta_i\alpha$ and $\lambda'_i=\lambda_i\alpha$. As by hypothesis, $\boldsymbol{v}\not\in \langle A \rangle$, we must have that \linebreak $\sum_{i=1}^h(\beta_i-\lambda_i)=0$. Then from \eqref{eq:affine_proof2}, we get a contradiction.
\end{proof}
In the next example, we show that the condition $\boldsymbol{v}\notin \langle A\rangle$ in the last proposition is necessary.

\begin{ejem}\quad 

\begin{enumerate}
\item Consider the $S_3$-linear set $A=\{\boldsymbol{a}_1,\boldsymbol{a}_2,\ldots, \boldsymbol{a}_{14}\}$ in $\mathbb{F}_{3}^{9}$
\begin{equation*} 
\begin{split}
A=\{&\boldsymbol{0},\, \boldsymbol{e}_1 ,\, \boldsymbol{e}_8,\, \boldsymbol{e}_9,\, 2\boldsymbol{e}_1+\boldsymbol{e}_2, \,\boldsymbol{e}_7+2\boldsymbol{e}_9,\, 2\boldsymbol{e}_1+2\boldsymbol{e}_2+\boldsymbol{e}_3,\, 2\boldsymbol{e}_2+2\boldsymbol{e}_3+\boldsymbol{e}_4,\\ 
& \boldsymbol{e}_1+2\boldsymbol{e}_3+2\boldsymbol{e}_4+\boldsymbol{e}_5, \,\boldsymbol{e}_2+2\boldsymbol{e}_4+2\boldsymbol{e}_5+\boldsymbol{e}_6,\, \boldsymbol{e}_3+2\boldsymbol{e}_5+2\boldsymbol{e}_6+\boldsymbol{e}_7, \\ 
&\, \boldsymbol{e}_6+2\boldsymbol{e}_8+\boldsymbol{e}_9,\, \boldsymbol{e}_4+2\boldsymbol{e}_6+2\boldsymbol{e}_7+\boldsymbol{e}_8, \,\boldsymbol{e}_5+2\boldsymbol{e}_7+2\boldsymbol{e}_8+\boldsymbol{e}_9
\}.
\end{split}
\end{equation*} Take $\boldsymbol{v}=\boldsymbol{a}_{4}+\boldsymbol{a}_{6}+\boldsymbol{a}_{8}\in\langle A\rangle$. However, the set $\boldsymbol{v}+A$ is not an $S_3$-linear set because, the next two $3$-linear combinations from $\boldsymbol{v}+A$ are equal,
\begin{equation*}
	\boldsymbol{v}+(\boldsymbol{v}+\boldsymbol{a}_{4})+(\boldsymbol{v}+\boldsymbol{a}_{6})=2(\boldsymbol{v}+\boldsymbol{a}_{4})+2(\boldsymbol{v}+\boldsymbol{a}_{6})+(\boldsymbol{v}+\boldsymbol{a}_{8}).
\end{equation*}
Here, we use the fact that $\boldsymbol{0}\in A$, to see that $\boldsymbol{v}\in \boldsymbol{v}+A$. 
\item Now, we give an $S_{3}$-linear set which not contains the zero vector of $V$. Let be $B=\{\boldsymbol{b}_1,\boldsymbol{b}_2,\ldots,\boldsymbol{b}_8\}\subset \mathbb{F}_{5}^{12}$ given by
\begin{equation*}
	\begin{split}
	B =\{& \boldsymbol{e}_{1}+3\boldsymbol{e}_{2}+4\boldsymbol{e}_{3}+\boldsymbol{e}_{6},\,\boldsymbol{e}_{2}+3\boldsymbol{e}_{3}+4\boldsymbol{e}_{4}+2\boldsymbol{e}_{6},\,\boldsymbol{e}_{3}+3\boldsymbol{e}_{4}+4\boldsymbol{e}_{5}+2\boldsymbol{e}_{7},\\
	& \boldsymbol{e}_{4}+3\boldsymbol{e}_{5}+4\boldsymbol{e}_{6}+2\boldsymbol{e}_{8},\,\boldsymbol{e}_{5}+3\boldsymbol{e}_{6}+4\boldsymbol{e}_{7}+2\boldsymbol{e}_{9},\,\boldsymbol{e}_{6}+3\boldsymbol{e}_{7}+4\boldsymbol{e}_{8}+2\boldsymbol{e}_{10},\\
	&\boldsymbol{e}_{7}+3\boldsymbol{e}_{8}+4\boldsymbol{e}_{9}+2\boldsymbol{e}_{11},\, \boldsymbol{e}_{8}+3\boldsymbol{e}_{9}+4\boldsymbol{e}_{10}+2\boldsymbol{e}_{12} \}.
	\end{split}
\end{equation*}
For 
$\boldsymbol{u}=\boldsymbol{b}_{2}+2\boldsymbol{b}_{3}+\boldsymbol{b}_{4}\in\langle B\rangle$, the set $\boldsymbol{u} + B$ is not $S_{3}$-linear because the following $3$-linear combinations from $\boldsymbol{u} + B$ are equal,  
\begin{equation*}
(\boldsymbol{u}+\boldsymbol{b}_{2})+(\boldsymbol{u}+\boldsymbol{b}_{3})+(\boldsymbol{u}+\boldsymbol{b}_{4})=2(\boldsymbol{u}+\boldsymbol{b}_{2})+3(\boldsymbol{u}+\boldsymbol{b}_{3})+2(\boldsymbol{u}+\boldsymbol{b}_{4}).
\end{equation*}
\end{enumerate}
\end{ejem}

\begin{lem}\label{cojuntosh-1}
If $A$ is an $S_h$-linear set of a finite vector space of dimension $r$ over $\mathbb{F}_q$, where $2h < r \leq|A|$, then $A$ is an $S_j$-linear set for all $1 \leq j \leq h-1$.
\end{lem}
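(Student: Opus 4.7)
My plan is to prove the lemma by contradiction. Suppose that for some $j$ with $1 \leq j \leq h-1$, the set $A$ fails to be $S_{j}$-linear. Then by Definition \ref{sh_linear_set} there exist two \emph{genuinely distinct} $j$-linear combinations of $A$ that represent the same vector in $V$:
$$\sum_{i=1}^{j}\lambda_{i}\boldsymbol{a}_{i}=\sum_{i=1}^{j}\gamma_{i}\boldsymbol{b}_{i},$$
where $\{\boldsymbol{a}_{1},\ldots,\boldsymbol{a}_{j}\}, \{\boldsymbol{b}_{1},\ldots,\boldsymbol{b}_{j}\}$ are $j$-subsets of $A$, the scalars $\lambda_{i},\gamma_{i}$ lie in $\mathbb{F}_{q}^{*}$, and the two expressions differ either in the underlying $j$-subset or in the scalar assignment on a common subset.

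The idea is to pad both sides with the same $h-j$ extra terms so as to produce two distinct $h$-linear combinations that are equal in $V$, contradicting the $S_{h}$-linearity of $A$. Concretely, I would select elements $\boldsymbol{c}_{1},\ldots,\boldsymbol{c}_{h-j}$ in $A$ that are disjoint from $\{\boldsymbol{a}_{1},\ldots,\boldsymbol{a}_{j}\}\cup\{\boldsymbol{b}_{1},\ldots,\boldsymbol{b}_{j}\}$, choose arbitrary scalars $\mu_{1},\ldots,\mu_{h-j}\in\mathbb{F}_{q}^{*}$ (say all equal to $1$), and add $\sum_{k=1}^{h-j}\mu_{k}\boldsymbol{c}_{k}$ to both sides. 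Each resulting expression is a bona fide $h$-linear combination of $A$: the support has exactly $h$ distinct elements, and all scalars are nonzero. The two padded expressions are equal in $V$ by construction, so it only remains to verify they are distinct as $h$-linear combinations; this is where the disjointness of the padding is crucial. If the padded supports differ, this is immediate. If the padded supports coincide, then since the $\boldsymbol{c}_{k}$'s lie outside both $\{\boldsymbol{a}_{i}\}$ and $\{\boldsymbol{b}_{i}\}$, we must have $\{\boldsymbol{a}_{1},\ldots,\boldsymbol{a}_{j}\}=\{\boldsymbol{b}_{1},\ldots,\boldsymbol{b}_{j}\}$, and hence the distinctness of the original $j$-combinations forces the scalars on some common $\boldsymbol{a}_{i}$ to differ.

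The only quantitative point to check is that the padding elements $\boldsymbol{c}_{1},\ldots,\boldsymbol{c}_{h-j}$ can actually be found inside $A$. The forbidden union has size at most $2j\leq 2h-2$, while the hypothesis $2h<r\leq|A|$ yields $|A|\geq 2h+1$, leaving at least $3$ candidate elements in $A$; picking any $h-j\geq 1$ of them suffices. This is the sole place where the dimension assumption $2h<r$ is used.

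I do not foresee a substantive obstacle in carrying this out: the lemma is essentially the statement that the $S_{h}$-linear property is downwards closed in $h$, because any equality between shorter linear combinations can be extended trivially, provided $A$ has enough room to supply the padding. The only care required is in matching the contradiction precisely to Definition \ref{sh_linear_set}, which is handled by the disjointness argument above.
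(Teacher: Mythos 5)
Your proposal is correct and follows essentially the same route as the paper: assume two distinct $j$-linear combinations coincide, then pad both sides with the same $h-j$ elements of $A$ lying outside both supports (possible since $|A|\geq 2h+1$) to manufacture two equal but distinct $h$-linear combinations, contradicting $S_h$-linearity. Your write-up is in fact somewhat more careful than the paper's, since you explicitly verify that the two padded expressions remain distinct as $h$-linear combinations.
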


\begin{proof}
Suppose $A$ is not an $S_{j}$-linear set, then there exist at least two $j$-linear combinations of distinct elements of $A$ that are equal in $V$, i.e.,
\begin{equation}\label{eq:sh_1sums}
\lambda_{1}\boldsymbol{a}_{1}+\lambda_{2}\boldsymbol{a}_{2}+\dots+\lambda_{j}\boldsymbol{a}_{j}=\beta_{1}\boldsymbol{b}_{1}+\beta_{2}\boldsymbol{b}_{2}+\dots+\beta_{j}\boldsymbol{b}_{j},
\end{equation}
where $\lambda_{i},\beta_{i}\in \mathbb{F}_q^*$ and $\boldsymbol{a}_{i},\boldsymbol{b}_{i}\in A$ for all $1\leq i\leq j$. These linear combinations involve at most $2h-2$ elements from $A$, which is possible by hypothesis.

Now, if we add to both sides in \eqref{eq:sh_1sums} $h-j$ elements from $A$ that no appear in \eqref{eq:sh_1sums}, we obtain two equal $h$-linear combinations in $A$, this contradicts the assumption that $A$ is an $S_{h}$-linear set.
\end{proof}
Note that when $q=2$, the hypothesis that $\boldsymbol{v}\notin \langle A\rangle$ in Proposition \ref{prop:traslacion_afin} is not necessary, i.e. if $A$ is an $S_h$-linear set in a finite vector space over $\mathbb{F}_2$, then $\boldsymbol{v}+A$ is also an $S_h$-linear set for all $\boldsymbol{v}\in V$. Thus, given an $S_h$-linear set, we can construct an $S_h$-linear set that contains the zero vector. To obtain an analogous result for $q \neq 2$, we proceed as follows.

\begin{lem}\label{lemaAU0}
	Let $A$ be a non-empty subset of a finite vector space $V$ of dimension $r$ over $\mathbb{F}_q$, with $q \neq 2$. If $A$ is an $S_{h}$-linear set, where $2h < r \leq |A|$, then $A \cup \{\boldsymbol{0}\}$ is also an $S_{h}$-linear set.
\end{lem}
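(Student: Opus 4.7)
The plan is to argue by contradiction, splitting on how the new element $\boldsymbol{0}$ can participate in two coincident $h$-linear combinations from $A\cup\{\boldsymbol{0}\}$.

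Suppose $A\cup\{\boldsymbol{0}\}$ is not an $S_h$-linear set. Then there exist $h$ distinct elements $\boldsymbol{a}_1,\ldots,\boldsymbol{a}_h$ of $A\cup\{\boldsymbol{0}\}$ and $h$ distinct elements $\boldsymbol{b}_1,\ldots,\boldsymbol{b}_h$ of $A\cup\{\boldsymbol{0}\}$, together with scalars $\lambda_i,\beta_i\in\mathbb{F}_q^*$, such that
\[
\lambda_1\boldsymbol{a}_1+\cdots+\lambda_h\boldsymbol{a}_h=\beta_1\boldsymbol{b}_1+\cdots+\beta_h\boldsymbol{b}_h,
\]
but the two combinations are genuinely different (different multiset of summands or different coefficients). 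I will split according to whether $\boldsymbol{0}$ appears on each side.

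\emph{Case 1: $\boldsymbol{0}$ appears on neither side.} Then both combinations are $h$-linear combinations of $A$, immediately contradicting the hypothesis that $A$ is an $S_h$-linear set.

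\emph{Case 2: $\boldsymbol{0}$ appears on exactly one side}, say $\boldsymbol{a}_1=\boldsymbol{0}$ while no $\boldsymbol{b}_i$ is $\boldsymbol{0}$. Since $\lambda_1\boldsymbol{a}_1=\boldsymbol{0}$, the equality simplifies to
\[
\lambda_2\boldsymbol{a}_2+\cdots+\lambda_h\boldsymbol{a}_h=\beta_1\boldsymbol{b}_1+\cdots+\beta_h\boldsymbol{b}_h,
\]
so an element of $\overline{(h-1)}A$ equals an element of $\overline{h}A$. By hypothesis $h\geq 2$ (the statement is trivial for $h=1$), and since $2h<r\leq|A|$ we may invoke Lemma \ref{cojuntosh-1} to conclude that $A$ is in particular an $S_{h-1}$-linear set. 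Now apply part \eqref{part2} of the previous proposition (this is precisely where the hypothesis $q\neq 2$ enters), which gives $\overline{h}A\cap\overline{(h-1)}A=\emptyset$, a contradiction.

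\emph{Case 3: $\boldsymbol{0}$ appears on both sides}, say $\boldsymbol{a}_1=\boldsymbol{b}_1=\boldsymbol{0}$. Cancelling the zero terms yields
\[
\lambda_2\boldsymbol{a}_2+\cdots+\lambda_h\boldsymbol{a}_h=\beta_2\boldsymbol{b}_2+\cdots+\beta_h\boldsymbol{b}_h,
\]
two $(h-1)$-linear combinations of $A$ that are equal. By Lemma \ref{cojuntosh-1} the set $A$ is $S_{h-1}$-linear, so these two combinations must coincide as multisets with identical coefficients; together with $\boldsymbol{a}_1=\boldsymbol{b}_1=\boldsymbol{0}$, the original two $h$-linear combinations then coincide, contradicting our assumption.

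The cases are exhaustive, so $A\cup\{\boldsymbol{0}\}$ is an $S_h$-linear set. The substantive step is Case 2: it is what forces the use of property \eqref{part2}, and hence the hypothesis $q\neq 2$, since over $\mathbb{F}_2$ an $h$-linear combination really can collapse to a shorter one (recall that in $\mathbb{F}_2$ the $S_h$-linear and $S_h$ notions coincide and the conclusion can be obtained by affine translation instead). The condition $2h<r\leq|A|$ is used only to guarantee that Lemma \ref{cojuntosh-1} applies so that $A$ inherits the $S_{h-1}$-linear property needed in Cases 2 and 3.
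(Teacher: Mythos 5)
Your proof is correct, and its skeleton (a case split on where $\boldsymbol{0}$ can occur in two coincident $h$-linear combinations) matches the paper's; the difference lies in how the decisive case --- $\boldsymbol{0}$ appearing on exactly one side --- is closed. The paper handles that case by hand: it splits further on whether the two supports $B$ and $C$ intersect and, in each subcase, uses $q\neq 2$ to pad the shorter combination with a term $\gamma\boldsymbol{c}_k$ chosen so that $\lambda_k+\gamma\neq 0$, manufacturing two equal $h$- or $(h-1)$-linear combinations inside $A$ itself. You instead observe that, after deleting the zero term, the equality exhibits an element of $\overline{(h-1)}A\cap\overline{h}A$ and invoke part \eqref{part2} of the earlier proposition, which says precisely that this intersection is empty for an $S_h$-linear set when $q\neq 2$. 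That is a legitimate and cleaner route: the padding argument is already packaged inside the proof of \eqref{part2}, so you avoid reproving it and skip the $B\cap C$ subcases entirely (your appeal to Lemma \ref{cojuntosh-1} in that case is in fact superfluous, since \eqref{part2} needs only the $S_h$-linear hypothesis); the paper's version buys self-containedness and makes the role of $q\neq 2$ visible on the spot. Your Case 3 is if anything slightly more careful than the paper's corresponding case, since you note explicitly that if the two residual $(h-1)$-combinations coincide then so do the original ones. One shared caveat, not a defect of your argument relative to the paper's: both proofs tacitly adopt the convention that the coefficient attached to $\boldsymbol{0}$ is immaterial when deciding whether two $h$-linear combinations ``coincide'' (as the counting in part \eqref{part1} already does); without that convention no set containing $\boldsymbol{0}$ could be $S_h$-linear over $\mathbb{F}_q$ with $q>2$, and the statement itself would be vacuous.
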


\begin{proof}
	If $\boldsymbol{0} \in A$, the result is immediate. Suppose $\boldsymbol{0} \notin A$ and that there are two equal $h$-linear combinations in $A \cup \{\boldsymbol{0}\}$, that is, 
	\begin{equation}
\beta_{1}\boldsymbol{b}_{1} + \beta_{2}\boldsymbol{b}_{2} + \dots + \beta_{h}\boldsymbol{b}_{h}=\lambda_{1}\boldsymbol{c}_{1} + \lambda_{2}\boldsymbol{c}_{2} + \dots + \lambda_{h}\boldsymbol{c}_{h},
		\label{eq:lemaAU0}	
	\end{equation}
	where $\beta_{i}, \lambda_{i} \in \mathbb{F}_{q}^*$ and $\boldsymbol{b}_{i}, \boldsymbol{c}_{i} \in A \cup \{\boldsymbol{0}\}$, for $1 \leq i \leq h$.
	
	Take $B = \{\boldsymbol{b}_{1}, \boldsymbol{b}_{2}, \ldots, \boldsymbol{b}_{h}\}$ and $C = \{\boldsymbol{c}_{1}, \boldsymbol{c}_{2}, \ldots, \boldsymbol{c}_{h}\}$.  We study the following cases:
	\begin{enumerate}
		\item If $\boldsymbol{0} \notin B \cup C$, then \eqref{eq:lemaAU0} is impossible because $A$ is $S_{h}$-linear. 
        
        \item If $\boldsymbol{0} \in B\cap C$, we get two equal $(h-1)$-linear combinations, but it is impossible by Lemma \ref{cojuntosh-1}.
		\item Without loss of generality, we can assume that $\boldsymbol{0}=\boldsymbol{b}_1 \in B$ and $\boldsymbol{0} \notin C$.
\begin{enumerate}
\item Let us suppose that  $B \cap C \neq \emptyset$. Assume, without restriction, that $|B\cap C|=1$ and $\boldsymbol{c}_{1}=\boldsymbol{b}_2 \in B\cap C$. Then from \eqref{eq:lemaAU0} we get 
		\begin{equation}
 \beta_{3}\boldsymbol{b}_{3} + \cdots + \beta_{h}\boldsymbol{b}_{h}=(\lambda_{1}-\beta_2)\boldsymbol{c}_{1} + \lambda_{2}\boldsymbol{c}_{2} + \dots + \lambda_{h}\boldsymbol{c}_{h}.
			\label{eq:lemaAU01}	
		\end{equation}
		If $\lambda_{1}\neq \beta_{2} $, the equation \eqref{eq:lemaAU01} gives rise to two equal $(h-1)$-linear combinations, which is impossible because by Lemma \ref{cojuntosh-1} $A$ is also an $S_{h-1}$-linear set. Now assume that, $\lambda_{1}=\beta_{2}$. Since that $|B^*| = h-1$, $|C| = h$ and $\boldsymbol{0} \notin C$, we obtain that $C \nsubseteq B$.  Take $\boldsymbol{c}_{k} \in C \setminus B$, for some $k\geq 2$. As $q\neq 2$, we can find $\delta \in \mathbb{F}_q^*$ such that $\lambda_k+\delta\neq 0$. Now, add $\delta \boldsymbol{c}_{k}$ to both sides of \eqref{eq:lemaAU01} to obtain
\begin{equation}
		\delta \boldsymbol{c}_{k} + \beta_{3}\boldsymbol{b}_{3} + \cdots + \beta_{h}\boldsymbol{b}_{h}=\lambda_{2}\boldsymbol{c}_{2} + \cdots + (\lambda_k+\delta)\boldsymbol{c}_k+\cdots+\lambda_{h}\boldsymbol{c}_{h}.
\label{eq:lemaAU02}	
\end{equation}
However, \eqref{eq:lemaAU02} contradicts that $A$ is an $S_{h-1}$-linear set.
		
		\item Secondly, suppose $B \cap C = \emptyset$. Then, since there exists $\gamma \in \mathbb{F}_{q}^{*}$ such that $\lambda_{1} + \gamma \neq 0$, we can add $\gamma \boldsymbol{c}_{1}$ to both sides of \eqref{eq:lemaAU0} to obtain
		$$\gamma\boldsymbol{c}_{1} + \beta_{2}\boldsymbol{b}_{2} + \cdots + \beta_{h}\boldsymbol{b}_{h}=(\lambda_{1}+\gamma)\boldsymbol{c}_{1} + \lambda_{2}\boldsymbol{c}_{2} + \dots + \lambda_{h}\boldsymbol{c}_{h},$$
		again leading to a contradiction.
		\end{enumerate}
	\end{enumerate}
	Thus, any pair of $h$-linear combinations in $A \cup \{\boldsymbol{0}\}$ are distinct. Therefore, $A \cup \{\boldsymbol{0}\}$ is an $S_{h}$-linear set.
\end{proof}

\begin{lem}\label{cojunto-sh-LI}
If $A$ is an $S_h$-linear set in $\mathbb{F}^{r}_{q}$ with $\boldsymbol{0} \in A$, where $2h <r\leq |A|$, then every subset of $A$ with $2h$ non-zero elements is linearly independent in $\mathbb{F}^{r}_{q}$.
\end{lem}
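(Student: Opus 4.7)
The plan is to argue by contradiction, extracting from an alleged linear dependence among $2h$ non-zero elements of $A$ two distinct $j$-linear combinations of $A$ with $j\leq h$ that evaluate to the same vector in $\mathbb{F}_q^r$. Combined with the hypothesis that $A$ is $S_h$-linear, Lemma \ref{cojuntosh-1} ensures $A$ is $S_j$-linear for every $1\leq j\leq h$, so such a coincidence furnishes the desired contradiction.

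Suppose, toward contradiction, that some $\{\boldsymbol{a}_1,\dots,\boldsymbol{a}_{2h}\}\subseteq A\setminus\{\boldsymbol{0}\}$ is linearly dependent and fix a non-trivial relation $\sum_{i=1}^{2h}\mu_i\boldsymbol{a}_i=\boldsymbol{0}$. Set $I=\{i:\mu_i\neq 0\}$; the case $|I|=1$ would force some $\boldsymbol{a}_i=\boldsymbol{0}$, contrary to assumption, hence $2\leq |I|\leq 2h$. I would then partition $I=J\sqcup K$ as evenly as possible, so that $1\leq |J|,|K|\leq h$ and $|J|-|K|\in\{0,1\}$, and transpose the relation into
\[
\sum_{i\in J}\mu_i\boldsymbol{a}_i=\sum_{i\in K}(-\mu_i)\boldsymbol{a}_i,
\]
which exhibits a $|J|$-linear combination equal to a $|K|$-linear combination of elements of $A\setminus\{\boldsymbol{0}\}$ on disjoint, non-empty supports, with all coefficients in $\mathbb{F}_q^*$.

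When $|J|=|K|=j$, the equation displays two different $j$-linear combinations of $A$ (built from disjoint supports) that coincide, directly contradicting the $S_j$-linearity of $A$. The main obstacle is the odd case $|I|=2l+1$, where the two sides have different sizes $l+1$ and $l$. Here I would exploit the assumption $\boldsymbol{0}\in A$: choosing any $\gamma\in\mathbb{F}_q^*$ and adding $\gamma\boldsymbol{0}$ to the right-hand side does not alter its value but promotes it to an $(l+1)$-linear combination involving $\boldsymbol{0}$ together with the $\boldsymbol{a}_i$ indexed by $K$. Because each $\boldsymbol{a}_i$ is non-zero, the vector $\boldsymbol{0}$ appears on only one side, so the two $(l+1)$-linear combinations are built from different subsets of $A$; as $l+1\leq h$ (from $2l+1\leq 2h$), this contradicts the $S_{l+1}$-linearity of $A$ once more. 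Both cases yielding a contradiction, the assumed dependence is impossible, and every $2h$-subset of $A\setminus\{\boldsymbol{0}\}$ is linearly independent.
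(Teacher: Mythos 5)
Your argument is correct, and it reaches the contradiction by a slightly different finishing move than the paper. Both proofs share the same core idea: split the support of the dependence relation into two halves of (nearly) equal size and use $\boldsymbol{0}\in A$ to repair the parity when the number of non-zero coefficients is odd. The difference is in how the resulting identity of two $j$-linear combinations with $j\leq h$ is turned into a contradiction. The paper pads both sides \emph{up} to genuine $h$-linear combinations by appending, with non-zero coefficients, $t/2$ (or $(t-1)/2$) of the vectors that received zero coefficients in the dependence relation, and then contradicts $S_h$-linearity directly; this requires checking that enough such ``unused'' vectors are available and that the padded expressions remain distinct. You instead go \emph{down}: you invoke Lemma \ref{cojuntosh-1} to conclude that $A$ is $S_j$-linear for every $1\leq j\leq h$ (the hypothesis $2h<r\leq|A|$ needed for that lemma is available here), so the two equal $j$-linear combinations on disjoint supports already give the contradiction. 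Your route avoids the padding bookkeeping entirely at the cost of leaning on the auxiliary lemma, which the paper has in any case already established at this point; the two arguments are otherwise equivalent in strength. All the small checks in your write-up are in order: $|I|\geq 2$ because the $\boldsymbol{a}_i$ are non-zero, $l+1\leq h$ in the odd case, and the two sides of your identity are genuinely different expressions because their supports are disjoint and $\boldsymbol{0}$ occurs on only one side.
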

\begin{proof} Suppose $\{\boldsymbol{a}_{1},\boldsymbol{a}_{2},\dots,\boldsymbol{a}_{2h}\}\subset A$ is a linearly dependent set, where for all $1\leq i\leq 2h$ $\boldsymbol{a}_i\neq \boldsymbol{0}$. Then, there exist scalars $\lambda_{1},\lambda_{2},\ldots,\lambda_{2h}\in \mathbb{F}_{q}$, not all zero, such that 
\begin{equation}
\lambda_{1}\boldsymbol{a}_{1}+\lambda_{2}\boldsymbol{a}_{2}+\dots+\lambda_{2h}\boldsymbol{a}_{2h}=\boldsymbol{0}.
\label{eq:lemaconjunto-sh-LI}
\end{equation}
Let $t=|\{i:\lambda_{i}=0\}|$. If $t$ is even, two $(h-\frac{t}{2})$-linear combinations can be formed equal to each other, and from them, two $h$-linear combinations equals can be constructed by adding $\frac{t}{2}$ distinct vectors on both sides taken from those with zero coefficients in \eqref{eq:lemaconjunto-sh-LI}.

If $t$ is odd, then in \eqref{eq:lemaconjunto-sh-LI} we add $\lambda_{2h+1}\boldsymbol{0}$ where $\lambda_{2h+1}\in\mathbb{F}_{q}^*$ and $\boldsymbol{0}\in A$, i.e.,
\begin{equation}
\lambda_{1}\boldsymbol{a}_{1}+\lambda_{2}\boldsymbol{a}_{2}+\dots+\lambda_{2h}\boldsymbol{a}_{2h}+\lambda_{2h+1}\boldsymbol{0}=\boldsymbol{0}.
\label{eq:lemaconjunto-sh-LI2}
\end{equation}
Then, in \eqref{eq:lemaconjunto-sh-LI2}, there are $(2h+1-t)$ non-zero coefficients, and we can form two equal $(h-\frac{t-1}{2})$-linear combinations. By adding $\frac{t-1}{2}$ distinct vectors from $A$ on both sides, taken from those with zero coefficients in \eqref{eq:lemaconjunto-sh-LI2}, we obtain two $h$-linear combinations equals. In any case, we get a contradiction.
\end{proof}

\begin{defin}[Maximal $S_{h}$-linear set]
Let $M$ be an $S_{h}$-linear set in a finite vector space $V$ over $\mathbb{F}_q$. We say that $M$ is a maximal $S_{h}$-linear set if, for every $S_{h}$-linear set $A$ such that $M \subseteq A \subseteq V$, we have that $M = A$.
\end{defin}
The next result follows directly from Lemma \ref{lemaAU0}.

\begin{cor}\label{cor_maximal_contiene0}
If $M$ is a maximal $S_{h}$-linear set in a finite vector space $V$ of dimension $r$ over $\mathbb{F}_q$, with $q \neq 2$ and $2h <r\leq |M|$, then $\boldsymbol{0}\in M$.
\end{cor}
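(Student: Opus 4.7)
The plan is to prove this by contradiction using Lemma \ref{lemaAU0} directly, since the hypotheses of the corollary are tailored exactly to the hypotheses of that lemma. First I would suppose, for contradiction, that $\boldsymbol{0}\notin M$. Since $M$ is an $S_h$-linear set in $V$ with $q\neq 2$ and $2h < r \leq |M|$, the hypotheses of Lemma \ref{lemaAU0} are satisfied verbatim with $A=M$, so I may conclude that $M\cup\{\boldsymbol{0}\}$ is also an $S_h$-linear set.

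Next I would observe that $M\subsetneq M\cup\{\boldsymbol{0}\}\subseteq V$, with strict containment coming precisely from the assumption $\boldsymbol{0}\notin M$. This contradicts the maximality of $M$ as an $S_h$-linear set, since the definition of maximal $S_h$-linear set requires that any $S_h$-linear set $A$ with $M\subseteq A\subseteq V$ satisfies $M=A$. Therefore the initial supposition fails and $\boldsymbol{0}\in M$.

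There is no real obstacle here; the content of the corollary is essentially a one-line reformulation of Lemma \ref{lemaAU0} in the language of maximal $S_h$-linear sets. The only thing to be slightly careful about is verifying that the hypothesis $2h<r\leq |M|$ of the corollary matches the form $2h<r\leq |A|$ needed to invoke the lemma, which it does with $A=M$, and that $M\cup\{\boldsymbol{0}\}$ is indeed a subset of $V$ (immediate, since $\boldsymbol{0}\in V$). Thus the proof is a short contradiction argument that cites Lemma \ref{lemaAU0} and the definition of maximality.
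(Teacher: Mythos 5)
Your proof is correct and is exactly the argument the paper intends: the paper states the corollary "follows directly from Lemma \ref{lemaAU0}," and your contradiction argument (apply the lemma to $A=M$, then invoke maximality against the strict containment $M\subsetneq M\cup\{\boldsymbol{0}\}$) is the standard way to spell that out. No gaps.
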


\begin{lem}\label{cojunto-sh-base}
If $A$ is a maximal $S_h$-linear set in $\mathbb{F}^{r}_{q}$, where $2h <r\leq |A|$, 
then $A$ contains a basis of $\mathbb{F}^{r}_{q}$ as a vector space over $\mathbb{F}_{q}$.
\end{lem}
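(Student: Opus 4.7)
The plan is to argue by contradiction and exploit maximality. Suppose that $A$ is a maximal $S_h$-linear set in $\mathbb{F}_q^r$ with $2h<r\leq |A|$, but $A$ does not contain a basis of $\mathbb{F}_q^r$. Since any spanning set contains a basis, this is equivalent to $\langle A\rangle\subsetneq \mathbb{F}_q^r$, so we may pick some $\boldsymbol{v}\in\mathbb{F}_q^r\setminus\langle A\rangle$. I would then prove that $A\cup\{\boldsymbol{v}\}$ is again an $S_h$-linear set, contradicting the maximality of $A$ and forcing $\langle A\rangle=\mathbb{F}_q^r$.

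To show $A\cup\{\boldsymbol{v}\}$ is $S_h$-linear, I would suppose towards a contradiction that two $h$-linear combinations that are distinct as formal expressions coincide in $V$:
\[
\lambda_{1}\boldsymbol{x}_{1}+\cdots+\lambda_{h}\boldsymbol{x}_{h}=\mu_{1}\boldsymbol{y}_{1}+\cdots+\mu_{h}\boldsymbol{y}_{h},
\]
with all $\lambda_i,\mu_i\in\mathbb{F}_q^{*}$ and the $\boldsymbol{x}_i$ (respectively $\boldsymbol{y}_i$) pairwise distinct. Since repetitions are forbidden within a single combination, $\boldsymbol{v}$ can appear at most once on each side, leading to a case analysis according to where $\boldsymbol{v}$ shows up.

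The four cases are: (i) $\boldsymbol{v}$ appears on neither side — then both expressions are $h$-linear combinations of $A$, directly contradicting that $A$ is $S_h$-linear; (ii) $\boldsymbol{v}$ appears on exactly one side, say with coefficient $\lambda_1$ — solving for $\boldsymbol{v}$ yields $\boldsymbol{v}\in\langle A\rangle$, a contradiction; (iii) $\boldsymbol{v}$ appears on both sides with coefficients $\lambda_1\neq\mu_1$ — the equation gives $(\lambda_1-\mu_1)\boldsymbol{v}\in\langle A\rangle$, and dividing by the nonzero scalar again forces $\boldsymbol{v}\in\langle A\rangle$; (iv) $\boldsymbol{v}$ appears on both sides with $\lambda_1=\mu_1$ — cancelling the $\boldsymbol{v}$ terms produces two $(h-1)$-linear combinations of $A$ that remain distinct as formal expressions (because the original ones were distinct and we removed identical summands) yet agree in $V$, contradicting Lemma \ref{cojuntosh-1}, which applies since $2h<r$ and gives that $A$ is also $S_{h-1}$-linear.

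The main obstacle is case (iv): this is the only place where $q=2$ is not trivially eliminated (there both coefficients equal $1$, so we are automatically in this subcase) and where the hypothesis $2h<r$ is genuinely used, through the passage to $S_{h-1}$-linearity. Notice that the proof does not require $\boldsymbol{0}\in A$ nor the restriction $q\neq 2$, so it treats the binary and non-binary cases uniformly and bypasses any appeal to Corollary \ref{cor_maximal_contiene0}.
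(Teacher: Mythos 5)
Your proposal is correct and follows essentially the same route as the paper: pick $\boldsymbol{v}\notin\langle A\rangle$, show $A\cup\{\boldsymbol{v}\}$ is still $S_h$-linear by a case analysis on where $\boldsymbol{v}$ occurs (using Lemma \ref{cojuntosh-1} when $\boldsymbol{v}$ appears on both sides with equal coefficients, and deducing $\boldsymbol{v}\in\langle A\rangle$ otherwise), and contradict maximality. Your version is merely more explicit about the four cases and about why the cancelled $(h-1)$-linear combinations remain formally distinct; the mathematical content is identical.
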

\begin{proof} Assume that $A$ is a maximal $S_h$-linear set and does not contain a basis of $\mathbb{F}^{r}_{q}$ over $\mathbb{F}_{q}$, then $A$ is not a spanning set of $\mathbb{F}^{r}_q$, that is $\langle A\rangle\neq \mathbb{F}^{r}_{q}$. Thus, there exists $\boldsymbol{v}\in \mathbb{F}^{r}_{q}$, which is not a linear combination of elements of $A$. Now, $B=A\cup\{\boldsymbol{v}\}$ is an $S_{h}$-linear set in $\mathbb{F}^{r}_{q}$. Indeed, suppose that there are two equal $h$-linear combinations from $B$.  If both $h$-linear combinations have $\boldsymbol{v}$ as a term, we obtain either two equal $(h-1)$-linear combinations of $A$ or $\boldsymbol{v} \in\langle A\rangle$. In any case, we obtain a contradiction.
 Thus, we conclude that only one of the two $h$-linear combination has $\boldsymbol{v}$ as a term. Hence, we can prove again that $\boldsymbol{v} \in\langle A\rangle$, which is a contradiction. However, $B$ to be an $S_{h}$-linear set contradicts the maximality of $A$ as an $S_{h}$-linear set. Therefore, $A$ must contain a basis of $\mathbb{F}^{r}_{q}$.
\end{proof}

The following is our main result, which establishes a one-to-one correspondence between $S_h$-linear sets and a family of $q$-linear codes.

\begin{teor}\label{principalteor}
There exists a $q$-linear code with parameters $[n,k,d]$ such that $d \geq 2h+1$ if and only if there exists an $S_h$-linear set with $n+1$ elements in $\mathbb{F}^{n-k}_{q}$, where $n-k \geq 2h$.
\end{teor}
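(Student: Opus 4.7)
The plan is to pass between linear codes and $S_h$-linear sets through the standard parity-check matrix dictionary, using Theorem~\ref{cojunto-LI-H} as the bridge: $d(\mathcal{C}) \geq 2h+1$ is equivalent to every $2h$ columns of a parity-check matrix $H$ of $\mathcal{C}$ forming a linearly independent set.

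For the forward implication, I would start from an $[n,k,d]$-linear code $\mathcal{C}$ with $d \geq 2h+1$, fix a parity-check matrix $H$ of size $(n-k)\times n$, and let $\boldsymbol{h}_1,\dots,\boldsymbol{h}_n$ be its columns. Since $d \geq 2h+1 \geq 3$, Theorem~\ref{cojunto-LI-H} shows that these columns are non-zero and pairwise distinct, and that $n-k \geq 2h$. I then set $A = \{\boldsymbol{0},\boldsymbol{h}_1,\dots,\boldsymbol{h}_n\} \subseteq \mathbb{F}_q^{n-k}$, which has $n+1$ elements, and check that $A$ is $S_h$-linear. If two $h$-linear combinations of $A$ evaluate to the same vector, then after deleting any $\boldsymbol{0}$-terms and grouping like columns of $H$, one obtains a linear relation on at most $2h$ distinct columns, which Theorem~\ref{cojunto-LI-H} forces to be trivial. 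A short bookkeeping then yields that the multisets of non-zero elements together with their coefficients coincide on both sides, so the two combinations are identified in the sense of Definition~\ref{sh_linear_set}.

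For the converse, I would take an $S_h$-linear set $A \subseteq \mathbb{F}_q^{n-k}$ with $|A|=n+1$ and first reduce to the case $\boldsymbol{0} \in A$. If $\boldsymbol{0} \notin A$, Lemma~\ref{lemaAU0} gives that $A \cup \{\boldsymbol{0}\}$ is still $S_h$-linear; removing any non-zero element (subsets of $S_h$-linear sets are $S_h$-linear, directly from Definition~\ref{sh_linear_set}) produces an $S_h$-linear set of size $n+1$ containing $\boldsymbol{0}$. Writing the $n$ non-zero elements as $\boldsymbol{h}_1,\dots,\boldsymbol{h}_n$ and letting $H$ be the $(n-k)\times n$ matrix with these columns, Lemma~\ref{cojunto-sh-LI} says that every $2h$ columns of $H$ are linearly independent, so by Theorem~\ref{cojunto-LI-H} the code $\mathcal{C}' = \{\boldsymbol{x}\in \mathbb{F}_q^n : H\boldsymbol{x}^T = \boldsymbol{0}\}$ has length $n$ and minimum distance at least $2h+1$. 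Since $H$ has $n-k$ rows, $\dim \mathcal{C}' \geq k$, and passing to any $k$-dimensional subspace yields the required $[n,k,d]$-code with $d \geq 2h+1$.

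The main technical point should be the bookkeeping in the forward direction: I must track carefully how the possible presence of $\boldsymbol{0}$ on each side of the equation interacts with the linear-independence argument and with the definition of ``distinct'' $h$-linear combinations, which identifies combinations that differ only in the coefficient of $\boldsymbol{0}$. A boundary caveat is that Lemmas~\ref{lemaAU0} and \ref{cojunto-sh-LI} are stated under the strict inequality $2h < r$, whereas the theorem allows equality $n-k=2h$; that corner case should be checked directly from Definition~\ref{sh_linear_set}.
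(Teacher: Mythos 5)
Your forward direction is essentially the paper's: both take $A$ to be the set of columns of a parity-check matrix together with $\boldsymbol{0}$, and convert two equal $h$-linear combinations into a linear dependence among at most $2h$ distinct columns, contradicting Theorem \ref{cojunto-LI-H} (the paper derives $n-k\geq 2h$ from the Singleton bound rather than from the independence of $2h$ columns, but these are the same fact). Your converse, however, genuinely departs from the paper. The paper first enlarges the given set to a \emph{maximal} $S_h$-linear set, invokes Corollary \ref{cor_maximal_contiene0} (or a translation when $q=2$) to place $\boldsymbol{0}$ in it, and uses Lemma \ref{cojunto-sh-base} to choose $n$ columns containing a basis, so that $H$ has rank exactly $n-k$ and the code has dimension exactly $k$. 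You work with the given $(n+1)$-set directly, accept that $H$ may have rank below $n-k$, and pass to a $k$-dimensional subcode of $\ker H$; this is correct, since a subspace of a code of minimum distance at least $2h+1$ retains that bound, and it frees you from the maximality machinery, at the cost of losing the sharper dimension count that the paper later exploits in Theorem \ref{principalteor2}. Your explicit remark that combinations differing only in the coefficient of $\boldsymbol{0}$ must be identified, and your flag of the boundary case $n-k=2h$ (Lemmas \ref{lemaAU0} and \ref{cojunto-sh-LI} are stated only for $2h<r$), are both points the paper glosses over; the second affects the paper's own proof in exactly the same way.

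The one genuine gap is your reduction to $\boldsymbol{0}\in A$ in the converse: you cite Lemma \ref{lemaAU0}, which is stated, and is only true, for $q\neq 2$. Over $\mathbb{F}_2$ the lemma fails even under its size hypotheses: the set $A=\{\boldsymbol{e}_1,\boldsymbol{e}_2,\boldsymbol{e}_1+\boldsymbol{e}_2,\boldsymbol{e}_3,\boldsymbol{e}_4\}\subset\mathbb{F}_2^5$ is an $S_2$-set with $2h<r\leq|A|$, yet $A\cup\{\boldsymbol{0}\}$ is not, because $\boldsymbol{0}+(\boldsymbol{e}_1+\boldsymbol{e}_2)=\boldsymbol{e}_1+\boldsymbol{e}_2$ gives two distinct $2$-linear combinations with the same value. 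For $q=2$ you must instead replace $A$ by the translate $\boldsymbol{v}+A$ for some $\boldsymbol{v}\in A$, which is again $S_h$-linear over $\mathbb{F}_2$ and contains $\boldsymbol{0}$; this is precisely how the paper handles that case. With this repair your argument goes through.
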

\begin{proof} Let $H$ be an $r\times n$ parity-check matrix of an $[n, k, d]$-linear code with minimum distance $d\geq 2h+1$, where $r = n-k$. By Theorem \ref{cojunto-LI-H}, its $n$ columns are non-zero and distinct; otherwise, it would be possible to find a linear dependent set of $d-1$ columns of $H$. Let $A=\{\col_1(H),\ldots,\col_n(H)\}\cup\{\boldsymbol{0}\}$ be the set of columns of $H$ union with $\{\boldsymbol{0}\}$ in $\mathbb{F}^{r}_{q}$. Now, by hypothesis $2h+1 \leq d$ and Singleton bound, see Theorem \ref{CotaSingleton}, we have that 
\begin{equation*}
\begin{split}
k+(2h+1) &\leq k+ d\leq n+1\\
 2h+1  & \leq  n-k+1   \\
     2h  & \leq  n-k.
\end{split}
\end{equation*}
Thus, $A$ contains more than $2h$ elements. 

On the other hand, assume that there are two equal $h$-linear combinations in $A$, i.e., such that
\begin{equation}\label{eq:hipomaskx}
\sum_{i=1}^{h}\lambda_{i}\boldsymbol{a}_{i}=\sum_{i=1}^{h}\beta_{i}\boldsymbol{b}_{i},
\end{equation}
with $\lambda_{i},\beta_{i}\in\mathbb{F}_{q}^*$. Then,
$$\lambda_{1}\boldsymbol{a}_{1}+\lambda_{2}\boldsymbol{a}_{2}+\dots+\lambda_{h}\boldsymbol{a}_{h}-\beta_{1}\boldsymbol{b}_{1}-\beta_{2}\boldsymbol{b}_{2}-\dots-\beta_{h}\boldsymbol{b}_{h}=\boldsymbol{0}.$$

Note that in \eqref{eq:hipomaskx} some terms on the left may be equal to terms on the right, but this cannot happen for all of them. In other words, we would have $2h$ or less $(2h-1)$ columns of $H$ that form a linearly dependent set. Since that $d\geq 2h+1$, it is a contradiction to Theorem \ref{cojunto-LI-H}.  Therefore, $A$ is an $S_{h}$-linear set in $\mathbb{F}_{q}^{r}$.

Conversely, let $r = n-k$ and suppose that there exists an $S_h$-linear set with $n+1$ elements in $\mathbb{F}^{r}_{q}$, where $n > r \geq 2h$; such a set is contained in a subset $A$ of $\mathbb{F}^{r}_{q}$ that is a maximal $S_h$-linear set. We can assume that $\boldsymbol{0}\in A$. Indeed, if $q=2$, we consider the set $\boldsymbol{v}+A$ for some $\boldsymbol{v}\in A$, while if $q\neq 2$, by Corollary \ref{cor_maximal_contiene0}, we have that $\boldsymbol{0}\in A$. By Lemma \ref{cojunto-sh-base}, $ A$ contains a basis of $\mathbb{F}^{r}_{q}$ over $\mathbb{F}_{q}$. Let $H$ be the $r\times n$ matrix whose columns are $n$ non-zero elements of $A$, including a basis of $\mathbb{F}^{r}_{q}$ over $\mathbb{F}_{q}$. By  Lemma \ref{cojunto-sh-LI} and Theorem \ref{cojunto-LI-H}, the $q$-linear code $\mathcal{C}$ with parity-check matrix $H$ satisfies that $d(\mathcal{C}) \geq 2h + 1$. Furthermore,  by Theorem 9 in \cite[Ch. 1, Sec. 10]{MS}, since $H$ has rank $r$, then $\mathcal{C}$ has dimension $k = n-r$.
\end{proof}

\begin{teor}\label{principalteor2}
	Let $h \geq 2$ and $n,r$ be positive integers such that $n>r\geq 2h$. If $A$ is an $S_{h}$-linear set in $\mathbb{F}_{q}^{r}$ with $n$ non-zero elements, then the $q$-linear code whose parity-check matrix has the $n$ non-zero elements of $A$ as columns is an $[n,t,d \geq 2h+1]$-linear code with $n-r\leq t\leq n-2h$. Moreover, if $A$ is a maximal $S_{h}$-linear set, then $t = n - r$.
\end{teor}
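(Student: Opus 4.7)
The plan is to mirror the structure of the converse direction of Theorem~\ref{principalteor}. Let $H$ be the $r\times n$ matrix whose columns are the $n$ non-zero elements of $A$, and let $\mathcal{C}$ be the $q$-linear code with parity-check matrix $H$. By construction $\mathcal{C}$ has length $n$, and its dimension equals $t = n - \operatorname{rank}(H) \geq n - r$, since $H$ has $r$ rows.

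The core of the argument is to show that every set of $2h$ columns of $H$ is linearly independent, which by Theorem~\ref{cojunto-LI-H} is equivalent to $d(\mathcal{C}) \geq 2h+1$. The columns of $H$ are precisely the $n$ non-zero elements of $A$. If $\boldsymbol{0} \in A$ already, Lemma~\ref{cojunto-sh-LI} applies directly and gives that any $2h$ non-zero elements of $A$ are linearly independent (the hypothesis $|A| > 2h$ is satisfied because $|A| \geq n > r \geq 2h$). If $\boldsymbol{0} \notin A$, first pass to the set $A \cup \{\boldsymbol{0}\}$, which is still $S_h$-linear by Lemma~\ref{lemaAU0} when $q \neq 2$, and then invoke Lemma~\ref{cojunto-sh-LI} for this augmented set; in either case the desired linear independence property for the columns of $H$ follows.

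Granted $d(\mathcal{C}) \geq 2h+1$, the Singleton bound (Theorem~\ref{CotaSingleton}) gives $t + (2h+1) \leq t + d \leq n+1$, forcing $t \leq n - 2h$. Combining with $t \geq n - r$ yields $n - r \leq t \leq n - 2h$, as desired. For the ``moreover'' part, suppose in addition that $A$ is a maximal $S_h$-linear set. Then Lemma~\ref{cojunto-sh-base} provides a basis of $\mathbb{F}_q^r$ contained in $A$; since basis vectors are non-zero, they appear among the columns of $H$, so $\operatorname{rank}(H) = r$ and hence $t = n - r$ exactly.

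The main obstacle is expected to be the reduction to the case $\boldsymbol{0} \in A$ when $A$ does not already contain $\boldsymbol{0}$, particularly for $q = 2$, since Lemma~\ref{lemaAU0} is stated only for $q \neq 2$. In the binary case one cannot simply add $\boldsymbol{0}$ to $A$ (the resulting set need not remain $S_h$-linear) and translating $A$ by some $\boldsymbol{v} \in A$ changes which elements appear as columns of $H$; thus, one would need to either adapt the case analysis of the proof of Lemma~\ref{cojunto-sh-LI} to work directly on $A$, or carefully track the effect of translation on linear independence of the columns.
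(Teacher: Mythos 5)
Your proposal follows the paper's proof of Theorem \ref{principalteor2} essentially step for step: reduce to the case $\boldsymbol{0}\in A$ (via Lemma \ref{lemaAU0} when $q\neq 2$), apply Lemma \ref{cojunto-sh-LI} together with Theorem \ref{cojunto-LI-H} to get $d\geq 2h+1$, and control the dimension through the rank of $H$. The only cosmetic difference is that you derive $t\leq n-2h$ from the Singleton bound, whereas the paper derives it from the fact that any $2h$ columns being independent forces $\operatorname{rank}(H)\geq 2h$; both work. Your use of Lemma \ref{cojunto-sh-base} for the ``moreover'' part is also what the paper intends, although its printed proof does not spell that step out.

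The one point where you stop short, namely $q=2$ with $\boldsymbol{0}\notin A$, is exactly where the paper's own argument is deficient, and your worry is justified rather than a mere technicality. The paper passes to $B=\boldsymbol{v}+A$ for some $\boldsymbol{v}\in A$ and forms $H$ from the non-zero elements of $B$; but $\boldsymbol{v}+\boldsymbol{v}=\boldsymbol{0}\in B$, so $B$ has only $n-1$ non-zero elements, and in any case these are not the columns prescribed by the statement, so nothing is concluded about the code built on $A$ itself. Indeed the distance claim can fail in this case: take $h=2$, $r=4$, and $A=\{\boldsymbol{e}_1,\boldsymbol{e}_2,\boldsymbol{e}_1+\boldsymbol{e}_2,\boldsymbol{e}_3,\boldsymbol{e}_4\}\subset\mathbb{F}_2^4$. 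Its ten pairwise sums of distinct elements are all distinct, so $A$ is an $S_2$-linear set with $n=5>r=4=2h$; yet the first three columns of the associated $H$ are linearly dependent, so the resulting code has $d=3<5=2h+1$. The element $\boldsymbol{0}$ cannot be adjoined here (that would force $\boldsymbol{e}_1+\boldsymbol{e}_2=\boldsymbol{0}+(\boldsymbol{e}_1+\boldsymbol{e}_2)$ to collide), which is precisely the obstruction you identified. So to close your proof one must either add the hypothesis $\boldsymbol{0}\in A$ when $q=2$, or restate the conclusion for a translate of $A$; adapting the case analysis of Lemma \ref{cojunto-sh-LI} directly to $A$ cannot succeed, since the lemma's conclusion is simply false without $\boldsymbol{0}\in A$.
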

\begin{proof}
	Suppose that $A$ is an $S_h$-linear set with $n$ non-zero elements in $\mathbb{F}_{q}^{r}$, where $n > r \geq 2h$. In fact, if $q=2$, we consider the set $B=\boldsymbol{v}+A$ for some $\boldsymbol{v}\in A$, while if $q\neq 2$, by Lemma \ref{lemaAU0}, the set $B = A \cup \{\boldsymbol{0}\}$ is also $S_h$-linear. In any case, $\boldsymbol{0}\in B$. Let $H$ be the matrix of size $r \times n$, where its columns are the $n$ non-zero elements of $B$. By Lemma \ref{cojunto-sh-LI} and Theorem \ref{cojunto-LI-H}, the code $\mathcal{C}$ with parity-check matrix $H$ has minimum distance $d \geq 2h + 1$. Moreover, if $s$ is the rank of $H$, then by Lemma \ref{cojunto-sh-LI} and Theorem 9 in \cite[Ch. 1, Sec. 10]{MS}, $2h\leq s\leq r$, thus the dimension of $\mathcal{C}$ is $n-s$ and satisfies that $n-r \leq n-s\leq n-2h$.
\end{proof}

\begin{ejem} We give some applications of theorems \ref{principalteor} and \ref{principalteor2}.
\begin{enumerate}
\item A way to construct codes of minimum distance at least $2h+1$ is with BCH codes. For instance, consider the BCH code over $\mathbb{F}_5$ with generator polynomial $g(x)=x^8 + 2x^7 + 2x^5 + 2x^4 + 2x^3 + x^2 + 2$. This is a [12,4,7]-linear code over $\mathbb{F}_5$ with parity-check matrix given by

$$H_1=\left(\begin{array}{rrrrrrrrrrrr}
1 & 3 & 4 & 0 & 2 & 0 & 0 & 0 & 0 & 0 & 0 & 0 \\
0 & 1 & 3 & 4 & 0 & 2 & 0 & 0 & 0 & 0 & 0 & 0 \\
0 & 0 & 1 & 3 & 4 & 0 & 2 & 0 & 0 & 0 & 0 & 0 \\
0 & 0 & 0 & 1 & 3 & 4 & 0 & 2 & 0 & 0 & 0 & 0 \\
0 & 0 & 0 & 0 & 1 & 3 & 4 & 0 & 2 & 0 & 0 & 0 \\
0 & 0 & 0 & 0 & 0 & 1 & 3 & 4 & 0 & 2 & 0 & 0 \\
0 & 0 & 0 & 0 & 0 & 0 & 1 & 3 & 4 & 0 & 2 & 0 \\
0 & 0 & 0 & 0 & 0 & 0 & 0 & 1 & 3 & 4 & 0 & 2
\end{array}\right).$$
Thus the set of the columns of $H_1\cup\{\boldsymbol{0}\}$ is an $S_3$-linear set in $\mathbb{F}_5^8$.
	\item The set of the columns of the matrix $$H_2=\left(\begin{array}{rrrrrrrrrrrrrr}
1 & 0 & 0 & 0 & 0 & 0 & 0 & 0 & 1 & 0 & 0 & 1 & 1 & 0 \\
0 & 1 & 0 & 0 & 0 & 0 & 0 & 0 & 0 & 1 & 0 & 1 & 1 & 0 \\
0 & 0 & 1 & 0 & 0 & 0 & 0 & 0 & 1 & 0 & 1 & 1 & 1 & 1 \\
0 & 0 & 0 & 1 & 0 & 0 & 0 & 0 & 1 & 1 & 0 & 0 & 1 & 1 \\
0 & 0 & 0 & 0 & 1 & 0 & 0 & 0 & 1 & 1 & 1 & 0 & 1 & 0 \\
0 & 0 & 0 & 0 & 0 & 1 & 0 & 0 & 0 & 1 & 1 & 0 & 0 & 1 \\
0 & 0 & 0 & 0 & 0 & 0 & 1 & 0 & 0 & 0 & 1 & 0 & 1 & 1 \\
0 & 0 & 0 & 0 & 0 & 0 & 0 & 1 & 0 & 0 & 0 & 1 & 1 & 1
\end{array}\right)$$ forms an $S_2$-set in $\mathbb{F}_2^8$. Note that rank of $H_2$ is equal to 8, thus the binary code with parity-check matrix $H_2$ has parameters $[14,6,d]$ with $d\geq 5$. In fact, it can be verified that $d=5$.
\item The columns of the matrix $$H_3=\left(\begin{array}{rrrrrrrr}
0 & 0 & 0 & 0 & 0 & 1 & 1 & 0 \\
0 & 0 & 0 & 0 & 0 & 1 & 1 & 0 \\
0 & 0 & 0 & 0 & 1 & 1 & 1 & 1 \\
1 & 0 & 0 & 0 & 0 & 0 & 1 & 1 \\
0 & 1 & 0 & 0 & 1 & 0 & 1 & 0 \\
0 & 0 & 0 & 0 & 1 & 0 & 0 & 1 \\
0 & 0 & 1 & 0 & 1 & 0 & 1 & 1 \\
0 & 0 & 0 & 1 & 0 & 1 & 1 & 1
\end{array}\right)$$
form an $S_2$-set in $\mathbb{F}_2^8$. By Theorem \ref{principalteor2}, the binary code $\mathcal{C}_3$ whose parity-check matrix is $H_3$ has dimension $0\leq t\leq 4$. It can be verified that $\mathcal{C}_3$ is an $[8,2,5]$ binary code.
\end{enumerate}
\end{ejem}

By Theorem \ref{principalteor} and our discussion after Definition \ref{def:h_linear}, we obtain the next result.
\begin{cor}
If there exists an $[n, k, d]$-linear code $\mathcal{C}$ with  $d(\mathcal{C}) \geq 2h+1$, then there exists an $S_h$-set with $n+1$ elements in $\mathbb{F}^{n-k}_{q}$, where $n-k \geq 2h$.
\end{cor}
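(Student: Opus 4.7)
The plan is essentially to chain together two facts that have already been established. The hypothesis of the corollary is precisely the left-hand side of the equivalence in Theorem \ref{principalteor}, so the first move is to invoke that theorem: the existence of an $[n,k,d]$-linear code $\mathcal{C}$ with $d(\mathcal{C})\geq 2h+1$ yields an $S_h$-linear set $A\subseteq \mathbb{F}_q^{n-k}$ with $|A|=n+1$, and the condition $n-k\geq 2h$ comes for free from the same theorem (it is a consequence of the Singleton bound applied in the proof of Theorem \ref{principalteor}).

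The second step is to pass from $S_h$-linear set to $S_h$-set. This is exactly the observation recorded in the paragraph immediately after Definition \ref{def:h_linear}: if $A$ is an $S_h$-linear set, then restricting Definition \ref{sh_linear_set} to the particular choice of coefficients $\lambda_1=\lambda_2=\cdots=\lambda_h=1$ shows that all sums of $h$ distinct elements of $A$ are pairwise distinct, which is precisely the defining property of an $S_h$-set in the abelian group $(\mathbb{F}_q^{n-k},+)$. Hence the same set $A$ of cardinality $n+1$ witnesses the conclusion.

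There is no real obstacle here; the only thing worth being careful about is that the notion of $S_h$-set requires the $h$ sums with coefficients $1$ of \emph{distinct} elements to be different, and this matches exactly the specialization of the $h$-linear combination condition used in Definition \ref{sh_linear_set} (distinct indices $i_1<i_2<\cdots<i_h$). So the proof reduces to two lines: apply Theorem \ref{principalteor}, then specialize all scalars to $1$.
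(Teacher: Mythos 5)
Your proposal is correct and matches the paper exactly: the paper derives this corollary by combining Theorem \ref{principalteor} with the remark following Definition \ref{def:h_linear} that every $S_h$-linear set is an $S_h$-set (specialize all scalars to $1$). Nothing further is needed.
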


\section{Consequences of Theorem \ref{principalteor}} 

A basic problem in coding theory is to maximize the cardinal of a linear code $\mathcal{C}$ in $\mathbb{F}^{n}_{q}$ with minimum distance $d$, represented by the function
$$\mathcal{B}_{q}(n,d)=\max\left\{\left|\mathcal{C}\right|:  \mathcal{C}\subseteq\mathbb{F}_q^n \text{ is a $q$-linear code}, \text{ with } d(\mathcal{C})\geq d \right\}.$$
Recall that in $\mathbb{F}_{q}^{n}$ the cardinal of a linear code can be calculated as $\left|\mathcal{C}\right|=q^{k}$, where $\mathbb{F}_q$ is the dimension of $\mathcal{C}$ over $\mathbb{F}_{q}$ then analyzing the maximum cardinal  is equivalent to determining the maximum dimension of a code on $\mathbb{F}_{q}$; that is, $\log_{q}\mathcal{B}_{q}(n,d)$.

Now, from Theorem \ref{principalteor} we have that $q$-linear code of length $n$ and minimum distance $d\geq 2h+1$, with maximum dimension can be obtained by searching for the minimum redundancy $ r=n-k$ for which $\mathbb{F}_{q}^{r}$ has an  $S_{h}$-linear set with $n+1$ elements. This additive problem is presented with the following function
$$\overline{\mathcal{V}}_{q}(h,n)=\min\limits_{2h\leq r <n}\left\{r:\mathbb{F}_{q}^{r} \text{ contains an  } S_{h}\text{-linear set with } n+1 \text{ elements}\right\}.$$

The study of the function $\overline{\mathcal{V}}_{q}(h,n)$ is useful to calculate the function $\mathcal{B}_{q}(n,d)$, as the following consequence shows it.

\begin{cor}\label{cor:consequence_mainthm}
Let $n$ and $h$ be positive integers, such that $2h<n$.  Then
\begin{equation}
\log_{q}\mathcal{B}_{q}(n,2h+1)=n-\overline{\mathcal{V}}_{q}(h,n).
\label{eq:dimredu}
\end{equation}
\end{cor}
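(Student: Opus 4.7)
The plan is to unwind both sides of the identity and match them via Theorem \ref{principalteor}, which is the content of the corollary in disguise.

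First I would rewrite the left-hand side. Since every $q$-linear code $\mathcal{C}\subseteq\mathbb{F}_q^n$ of dimension $k$ satisfies $|\mathcal{C}|=q^k$, the quantity $\log_q\mathcal{B}_q(n,2h+1)$ equals the maximum dimension $k_{\max}$ attained by an $[n,k,d]$-linear code over $\mathbb{F}_q$ with $d\geq 2h+1$. Equivalently, $\log_q\mathcal{B}_q(n,2h+1)=n-r_{\min}$, where $r_{\min}$ is the minimum possible redundancy $r=n-k$ among such codes.

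Next I would bring in Theorem \ref{principalteor} to translate existence of a code into existence of an $S_h$-linear set. That theorem asserts that a $q$-linear code with parameters $[n,k,d]$ and $d\geq 2h+1$ exists if and only if $\mathbb{F}_q^{n-k}$ contains an $S_h$-linear set of cardinality $n+1$, subject to $n-k\geq 2h$. Consequently, the set of admissible redundancies
\[
\mathcal{R}=\{\,r:2h\leq r<n \text{ and there is an }[n,n-r,d]\text{-code over }\mathbb{F}_q\text{ with }d\geq 2h+1\,\}
\]
coincides with
\[
\mathcal{R}'=\{\,r:2h\leq r<n \text{ and }\mathbb{F}_q^{r}\text{ contains an }S_h\text{-linear set with }n+1\text{ elements}\,\}.
\]
By definition, $\min\mathcal{R}'=\overline{\mathcal{V}}_q(h,n)$, so $r_{\min}=\overline{\mathcal{V}}_q(h,n)$ and therefore
\[
\log_q\mathcal{B}_q(n,2h+1)=n-r_{\min}=n-\overline{\mathcal{V}}_q(h,n),
\]
which is the claimed identity.

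The only non-routine point is verifying that the bound constraint $r\geq 2h$ baked into $\overline{\mathcal{V}}_q(h,n)$ is automatic on the coding side, but this is exactly what the Singleton-bound computation inside the proof of Theorem \ref{principalteor} guarantees: any $[n,k,d]$-code over $\mathbb{F}_q$ with $d\geq 2h+1$ already satisfies $n-k\geq 2h$. Hence the two optimization problems are literally the same and the identification of the extremal values is immediate; there is no genuine obstacle beyond carefully tracking the index conventions.
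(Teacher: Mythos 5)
Your argument is correct and is essentially the paper's own proof: the paper establishes the identity as two inequalities (taking $r=\overline{\mathcal{V}}_{q}(h,n)$ to get a code of dimension $n-r$, then deriving a contradiction from any code of larger dimension), which is just a rephrasing of your observation that Theorem \ref{principalteor} makes the two sets of admissible redundancies coincide. Your remark that the Singleton bound makes the constraint $n-k\geq 2h$ automatic is likewise the same point the paper uses inside the proof of Theorem \ref{principalteor}.
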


\begin{proof} Assume $r=\overline{\mathcal{V}}_{q}(h,n)$. Then there exists an $S_{h}$-linear set in $\mathbb{F}_{q}^{r}$ with $n+1$ elements such that $2h \leq r<n$. Thus, by  Theorem \ref{principalteor}, there exists an $[n,n-r,d]$-linear code $\mathcal{C}$ over $\mathbb{F}_{q}$ with $d(\mathcal{C})\geq 2h+1$, this implies that 
$$\log_{q}\mathcal{B}_{q}(n,2h+1)\geq n-r=n-\overline{\mathcal{V}}_{q}(h,n).$$
Now, suppose that there exists an $[n,k,d]$-linear code $\mathcal{C}$ with $d(\mathcal{C})\geq2h+1$ such that $k>n-\overline{\mathcal{V}}_{q}(h, n)$. Then, Theorem \ref{principalteor} guarantees the existence of an  $S_{h}$-linear set with $n+1$ elements in $\mathbb{F}_{q}^{n-k}$, where $n-k \geq 2h$. However,  $n-k <\overline{\mathcal{\mathcal{V}}}_{q}(h,n)$ contradicts the minimality of $\overline{\mathcal{V}}_{q}(h,n)$.
\end{proof}

Since that every $S_{h}$-linear set in $\mathbb{F}_{q}^{r}$ is also an $S_{h}$-set in $\mathbb{F}_{q}^{r}$, then  $\mathcal{V}_{q}(h,n)\leq\overline{\mathcal{V}}_{q}(h,n)$ where
$$\mathcal{V}_{q}(h,n)=\min\limits_{2h\leq r <n}\{r:\mathbb{F}_{q}^{r} \text{ contains an } S_{h}\text{-set  with } n+1 \text{ elements}\}.$$

From the above paragraph and Corollary \ref{cor:consequence_mainthm}, we have proven the next result. 

\begin{cor}
Let $n$ and $h$ be positive integers, such that $n>2h$. If $\mathcal{V}_{q}(h,n)$ exists, then
\begin{equation*}
\log_{q}\mathcal{B}_{q}(n,2h+1)\leq n-\mathcal{V}_{q}(h,n).
\label{eq:dimredu2}
\end{equation*}
\end{cor}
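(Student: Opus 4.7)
My plan is that this corollary should fall out almost immediately by chaining together two facts already in play: the key identity from Corollary \ref{cor:consequence_mainthm}, namely $\log_q \mathcal{B}_q(n, 2h+1) = n - \overline{\mathcal{V}}_q(h,n)$, and the comparison $\mathcal{V}_q(h,n) \leq \overline{\mathcal{V}}_q(h,n)$ noted in the paragraph immediately preceding the statement. The bulk of the work is really just to justify this comparison with slightly more care than the one-line remark in the text.

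For the comparison, the step I would spell out is this: taking $\lambda_1 = \cdots = \lambda_h = 1$ in Definition \ref{def:h_linear} shows that every $h$-linear combination of $A$ specializes to a weak $h$-sum, and hence any $S_h$-linear set in $\mathbb{F}_q^r$ is automatically an $S_h$-set in $\mathbb{F}_q^r$. Consequently, if $r_0 = \overline{\mathcal{V}}_q(h,n)$ is realized by some $S_h$-linear set of cardinality $n+1$ in $\mathbb{F}_q^{r_0}$, then the same set witnesses the existence of an $S_h$-set of cardinality $n+1$ in $\mathbb{F}_q^{r_0}$, so the minimum defining $\mathcal{V}_q(h,n)$ is taken over a set containing $r_0$, yielding $\mathcal{V}_q(h,n) \leq r_0 = \overline{\mathcal{V}}_q(h,n)$. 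Here one should also observe that the hypothesis $n > 2h$ ensures the constraint $2h \leq r < n$ in both minimization problems is nonempty in a compatible way.

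Given this, the proof concludes in a single line: by Corollary \ref{cor:consequence_mainthm},
\begin{equation*}
\log_q \mathcal{B}_q(n, 2h+1) \;=\; n - \overline{\mathcal{V}}_q(h,n) \;\leq\; n - \mathcal{V}_q(h,n),
\end{equation*}
which is the desired inequality. I do not anticipate any genuine obstacle; the only subtle point is to make sure the hypothesis \emph{``$\mathcal{V}_q(h,n)$ exists''} in the statement is consistent with the use of $\overline{\mathcal{V}}_q(h,n)$, which is fine because whenever $\overline{\mathcal{V}}_q(h,n)$ exists the above implication forces $\mathcal{V}_q(h,n)$ to exist as well, and one can simply invoke Corollary \ref{cor:consequence_mainthm} in the opposite direction if needed to justify the existence of $\overline{\mathcal{V}}_q(h,n)$.
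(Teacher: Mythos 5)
Your proposal is correct and follows exactly the paper's own route: the paper likewise observes that every $S_h$-linear set is an $S_h$-set (by specializing all scalars to $1$), deduces $\mathcal{V}_{q}(h,n)\leq\overline{\mathcal{V}}_{q}(h,n)$, and combines this with the identity of Corollary \ref{cor:consequence_mainthm}. Your extra care about the existence of $\overline{\mathcal{V}}_{q}(h,n)$ is a reasonable refinement of a point the paper itself leaves implicit, but it does not change the argument.
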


Recall that the concepts of $S_h$-linear set and $S_h$-set are equivalent when we are working in the binary case, thus Theorem \ref{principalteor} can be seen as a generalization of Theorem 1 in \cite{HO} and Theorem 6 in \cite{GT}. Furthermore, $\overline{\mathcal{V}}_{2}(h,n)=\mathcal{V}_{2}(h,n)$. 

Theorem \ref{principalteor} and the tables provided in \cite{Grassl:codetables} allows us to calculate $\mathcal{V}_{2}(h,n)$ for some values of $h$ and $n$, as we show in the sequel. Before presenting the examples, it is necessary to recall that the cardinal of the largest $S_{2}$-set in $\mathbb{F}_{2}^{4}$ is $6$, see \cite[Table 2]{HO}. Thus, $\mathbb{F}_{2}^{4}$ does not contain $S_{2}$-sets with $n+1$ elements, for $n\geq 6$. Furthermore, by Singleton bound, if $\mathcal{C}$ is an $[5,2]$-binary code, then $d(\mathcal{C})\leq 4$. Therefore, $\mathcal{V}_{2}(2,5)=4$.

Now, we calculate $\mathcal{V}_{2}(2,8)$. As in $\mathbb{F}_2^4$ the largest $S_2$-set has size $6$, we obtain that $\mathcal{V}_{2}(2,8)>4$. By Theorem \ref{principalteor}, we know that there exists an $S_{2}$-set
with $9$ elements in $\mathbb{F}_{2}^{r}$ if and only if there exists an $[8,k,d\geq 5]$-binary linear code such that $r=8-k$. From \cite{Grassl:codetables}, there is an $[8,2,5]$-binary linear code, thus $ \mathcal{V}_{2}(2,8)\leq 6$. But also, by searching in \cite{Grassl:codetables} there is no a binary linear code with parameters $[8,3,d\geq 5]$. Thus, $\mathcal{V}_{2}(2,8)=6$.

Similarly, we can calculate $\mathcal{V}_{2}(2,19)$. Again, by \cite{Grassl:codetables}, there is a $[19,10,5]$-binary linear code, hence $\mathcal{V}_{2}(2,19)\leq 9$. Besides, we can verify in \cite{Grassl:codetables} that there is no a $[19,k,d \geq 5]$-binary linear code such that $19 - k = r$, for $11\leq k\leq 14$. Thus, $\mathcal{V}_{2}(2,19)= 9$. 

Figure \ref{fig:nu2}, presents the values obtained for $\mathcal{V}_{2}(h,n)$, for $2\leq h\leq 6$ and $2h+1\leq n\leq 256$ using tables from \cite{Grassl:codetables}.  Also, Figure \ref{fig:nu3} shows the figure of $\overline{\mathcal{V}}_{3}(h,n)$ for $2\leq h \leq 6$ and $2h+1 \leq n\leq 243$. 

In general, if $A$ is an $S_h$-linear set in $\mathbb{F}_q^r$, then any non-empty subset of $A$ is also an $S_h$-linear set in $\mathbb{F}_q^r$. Consequently, if $m \geq n$, it follows that $\overline{\mathcal{V}}_{q}(h,n) \leq \overline{\mathcal{V}}_{q}(h,m)$.
Moreover, as illustrated in figures~\ref{fig:nu2} and~\ref{fig:nu3}, there exist values of $n$ for which 
$\overline{\mathcal{V}}_{q}(h,n+1) = \overline{\mathcal{V}}_{q}(h,n)$,
and values of $m$ for which 
$\overline{\mathcal{V}}_{q}(h,m+1) = \overline{\mathcal{V}}_{q}(h,m) + 1$.

\begin{figure}[ht]
	\centering
		\includegraphics[scale=0.8]{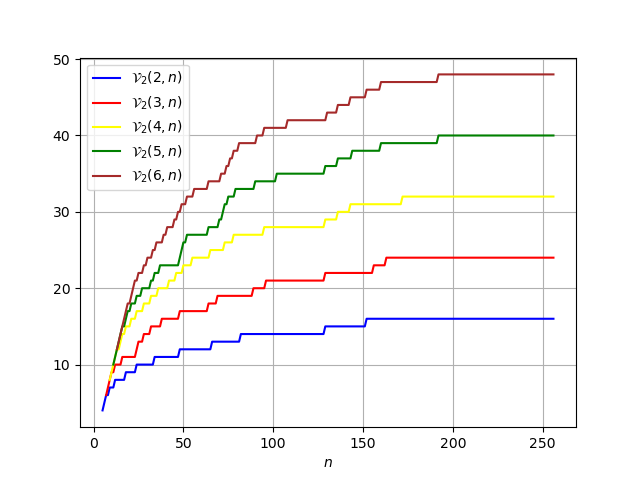}
	\caption{Values of $\mathcal{V}_2(h,n)$ for $2\leq h\leq 6$ and $2h+1\leq n\leq 256$.}
	\label{fig:nu2}
\end{figure}
\begin{figure}[ht]
	\centering
		\includegraphics[scale=0.8]{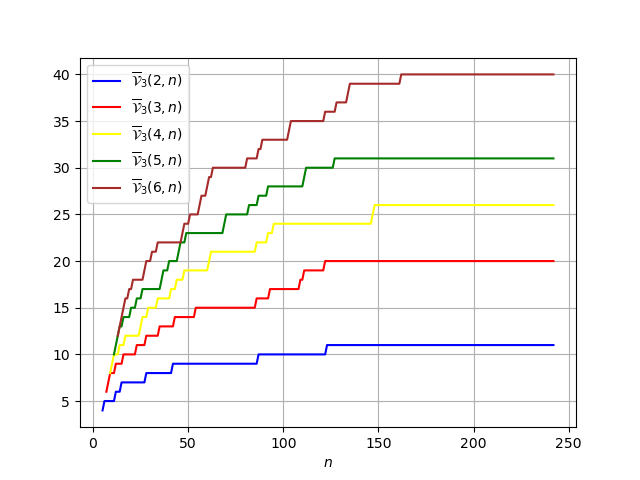}
	\caption{Values of $\overline{\mathcal{V}}_3(h,n)$ for $2\leq h\leq 6$ and $2h+1\leq n\leq 243$.}
	\label{fig:nu3}
\end{figure}

We now consider the function \begin{align*}
\overline{S}_h(\mathbb{F}_q^r)&=\max\left\{ |A|: A\subseteq \mathbb{F}_q^r \text{ is an $S_h$-linear set} \right\},
\end{align*} 
for some $h$, $q$ and $r$. Note that $\overline{S}_h(\mathbb{F}_q^r)\leq S_h(\mathbb{F}_q^r)$, where $S_h(\mathbb{F}_q^r)$ is the maximal cardinal of an $S_h$-set in $\mathbb{F}_q^r$. Also, $\overline{S}_h(\mathbb{F}_2^r)=S_h(\mathbb{F}_2^r)\geq h$ for all $h\leq 2^r$. 

The combination of Theorem \ref{principalteor} and the tables provided in \cite{Grassl:codetables} allows us to calculate lower bounds for $\overline{S}_h(\mathbb{F}_q^r)$ for $q=2,3,4,5,7$ and $9$. In the tables of \cite{Grassl:codetables}, for a fixed pair $(n,k)$, it is given lower and upper bounds for the minimum distance of an $[n,k]$ code. In some cases, these tables provide an exact value for the greatest possible value of this minimum distance. Thus, the procedure is as follows: given $r\in \mathbb{Z}^+$ and $h\geq 2$ search pairs $(n,k)$ such that $r=n-k$ and there exist an $[n,k,d]$-code with $d\geq 2h+1$ in \cite{Grassl:codetables}. For instance, for $r=9$ we can find that for the pair $(21,12)$ there is a $[21,12,5]$ binary code. Thus, by Theorem \ref{principalteor}, we know that there exists an $S_2$-set in $\mathbb{F}_2^9$ with 22 elements. Following this idea, we can iterate through the table in \cite{Grassl:codetables} searching for pairs $(n,k)$ such that $n-k=9$ and for which there is a $[n,k,5]$ binary code: $(19, 10), (20, 11), (21, 12), (22, 13), (23, 14)$. This means that there is an $S_2$-set with 24 elements in $\mathbb{F}_2^9$. Thus, $S_2(\mathbb{F}_2^9) \geq 24$. Some of these lower bounds are given in Table \ref{table:values_of_Sh_2}. Also, this compilation can be done in linear codes over $\mathbb{F}_3, \mathbb{F}_4, \mathbb{F}_5, \mathbb{F}_7$ and $\mathbb{F}_9$, see tables \ref{table:values_of_Sh_3}, \ref{table:values_of_Sh_5}, \ref{table:values_of_Sh_7}, \ref{table:values_of_Sh_8}, \ref{table:values_of_Sh_4} and \ref{table:values_of_Sh_9}.\\

Note that for certain values of $q$, $r$, and $h$, it does not make sense to compute $\overline{S}_h(\mathbb{F}_q^r)$ from our construction. This occurs because it is not possible to find a code with the required parameters. For instance, when $r=4$, there is no $[n,k]$-binary code with $n-k=4$ and $d\geq 7$, since, by the Singleton bound, we have $d\leq 5$ in this case. For such situations, we write X in the corresponding cell of the relevant table.

On the other hand, if we have a set $A$ which is an $S_h$-linear set in $\mathbb{F}_q^r$, it suffices to append a coordinate equal to zero to each of its elements to obtain an $S_h$-linear set in $\mathbb{F}_q^{r+1}$. This shows that $\overline{S}_h(\mathbb{F}_q^r) \leq \overline{S}_h(\mathbb{F}_q^t)$ for all $t \geq r$.

Unfortunately, there is a limit to the code lengths that can be studied based on \cite{Grassl:codetables}: for binary and quaternary codes, the maximum length is 256; for ternary codes, it is 243; for codes over $\mathbb{F}_5$, $\mathbb{F}_8$ and  $\mathbb{F}_9$, the maximum is 130 and finally for $\mathbb{F}_7$ is 100. Thus, for instance if there exists an $r$ such that $S_2(\mathbb{F}_2^r)\geq 257$, then by the observation made in the previous paragraph, for all $t \geq r$, our computations will yield the same lower bound; that is, $S_2(\mathbb{F}_2^t)\geq 257$. 

Finally, we note that deriving explicit expressions for the functions 
$\overline{\mathcal{V}}_{q}(h,n)$ and $\overline{S}_h(\mathbb{F}_q^r)$ 
remains an open problem that may lead to further research.

\section*{Acknowledgments}
Viviana Guerrero expresses her gratitude for the support of  MINCIENCIAS - Colombia for his doctoral studies through the ``Convocatoria del Fondo de Ciencia, Tecnología e Innovación del Sistema General de Regalías para la conformación de una lista de proyectos elegibles para ser viabilizados, priorizados y aprobados por el OCAD, en el marco del Programa de Becas de Excelencia Doctoral del Bicentenario Corte  BPIN: 2020000100319." J.H. Castillo was partially supported by Vicerrector\'{i}a de Investigaciones e Interacci\'on Social at Universidad de Nari\~no. C. Trujillo acknowledges the support of the Doctorado en Ciencias Matemáticas at the Universidad del Cauca. The authors are members of the research group ``Álgebra, Teoría de Números y Apliciones: ERM". ALTENUA is supported by Universidad del Cauca, Universidad de Antioquia, Universidad del Valle, and Universidad de Nariño.


\newpage 
\begin{table}[H]
\centering
\begin{tabular}{|c|c|c|c|c|c|c|c|}
\hline
$r$ & $S_2(\mathbb{F}_2^r)$ & $S_3(\mathbb{F}_2^r)$ & $S_4(\mathbb{F}_2^r)$ & $S_5(\mathbb{F}_2^r)$ & $S_6(\mathbb{F}_2^r)$ & $S_7(\mathbb{F}_2^r)$ & $S_8(\mathbb{F}_2^r)$ \\ \hline \hline 
4 & 6 & X & X & X & X & X & X \\ \hline  5 & 7 & X & X & X & X & X & X \\ \hline  6 & 9 & 8 & X & X & X & X & X \\ \hline  7 & 12 & 9 & X & X & X & X & X \\ \hline  8 & 18 & 10 & 10 & X & X & X & X \\ \hline  9 & 24 & 12 & 11 & X & X & X & X \\ \hline  10 & 34 & 16 & 12 & 12 & X & X & X \\ \hline  11 & 48 & 24 & 13 & 13 & X & X & X \\ \hline  12 & 66 & 25 & 15 & 14 & 14 & X & X \\ \hline  13 & 82 & 28 & 16 & 15 & 15 & X & X \\ \hline  14 & 129 & 32 & 18 & 16 & 16 & 16 & X \\ \hline  15 & 152 & 38 & 21 & 18 & 17 & 17 & X \\ \hline  16 & 257 & 48 & 24 & 19 & 18 & 18 & 18 \\ \hline  17 & 257 & 64 & 28 & 21 & 19 & 19 & 19 \\ \hline  18 & 257 & 69 & 32 & 24 & 21 & 20 & 20 \\ \hline  19 & 257 & 89 & 36 & 27 & 22 & 21 & 21 \\ \hline  20 & 257 & 96 & 42 & 32 & 23 & 22 & 22 \\ \hline  21 & 257 & 129 & 46 & 34 & 25 & 24 & 23 \\ \hline  22 & 257 & 156 & 50 & 37 & 28 & 25 & 24 \\ \hline  23 & 257 & 163 & 55 & 48 & 30 & 26 & 25 \\ \hline  24 & 257 & 257 & 65 & 49 & 33 & 28 & 27 \\ \hline  25 & 257 & 257 & 73 & 50 & 35 & 32 & 28 \\ \hline  26 & 257 & 257 & 78 & 52 & 39 & 33 & 29 \\ \hline  27 & 257 & 257 & 95 & 64 & 41 & 36 & 30 \\ \hline  28 & 257 & 257 & 129 & 70 & 45 & 38 & 32 \\ \hline  29 & 257 & 257 & 136 & 72 & 47 & 41 & 33 \\ \hline  30 & 257 & 257 & 143 & 73 & 49 & 45 & 35 \\ \hline  31 & 257 & 257 & 172 & 75 & 52 & 48 & 38 \\ \hline  32 & 257 & 257 & 257 & 79 & 56 & 49 & 40 \\ \hline  33 & 257 & 257 & 257 & 90 & 64 & 52 & 43 \\ \hline  \end{tabular}
\caption{Lower bounds for $S_h(\mathbb{F}_2^r)$ for $2\leq h\leq 8$ and $4\leq r \leq 33$.}
	 \label{table:values_of_Sh_2}
	 \end{table}

\begin{table}[H]
\centering
\begin{tabular}{|c|c|c|c|c|c|c|c|}\hline $r$ & $\overline{S}_2(\mathbb{F}_3^r)$ & $\overline{S}_3(\mathbb{F}_3^r)$ & $\overline{S}_4(\mathbb{F}_3^r)$ & $\overline{S}_5(\mathbb{F}_3^r)$ & $\overline{S}_6(\mathbb{F}_3^r)$ & $\overline{S}_7(\mathbb{F}_3^r)$ & $\overline{S}_8(\mathbb{F}_3^r)$ \\ \hline  \hline
 4 & 6 & X & X & X & X & X & X \\ \hline  5 & 12 & X & X & X & X & X & X \\ \hline  6 & 15 & 8 & X & X & X & X & X \\ \hline  7 & 28 & 9 & X & X & X & X & X \\ \hline  8 & 42 & 12 & 10 & X & X & X & X \\ \hline  9 & 87 & 16 & 11 & X & X & X & X \\ \hline  10 & 123 & 23 & 14 & 12 & X & X & X \\ \hline  11 & 244 & 28 & 17 & 13 & X & X & X \\ \hline  12 & 244 & 35 & 25 & 14 & 14 & X & X \\ \hline  13 & 244 & 43 & 26 & 16 & 15 & X & X \\ \hline  14 & 244 & 54 & 29 & 20 & 16 & 16 & X \\ \hline  15 & 244 & 86 & 34 & 23 & 17 & 17 & X \\ \hline  16 & 244 & 93 & 41 & 26 & 19 & 18 & 18 \\ \hline  17 & 244 & 109 & 44 & 36 & 21 & 19 & 19 \\ \hline  18 & 244 & 111 & 48 & 37 & 27 & 21 & 20 \\ \hline  19 & 244 & 122 & 61 & 40 & 28 & 23 & 21 \\ \hline  20 & 244 & 244 & 62 & 45 & 31 & 29 & 22 \\ \hline  21 & 244 & 244 & 86 & 46 & 34 & 30 & 24 \\ \hline  22 & 244 & 244 & 92 & 49 & 47 & 33 & 27 \\ \hline  23 & 244 & 244 & 95 & 69 & 48 & 34 & 29 \\ \hline  24 & 244 & 244 & 147 & 70 & 51 & 49 & 32 \\ \hline  25 & 244 & 244 & 148 & 82 & 56 & 50 & 36 \\ \hline  26 & 244 & 244 & 244 & 87 & 57 & 53 & 39 \\ \hline  27 & 244 & 244 & 244 & 92 & 60 & 58 & 40 \\ \hline  28 & 244 & 244 & 244 & 111 & 61 & 59 & 42 \\ \hline  29 & 244 & 244 & 244 & 112 & 63 & 62 & 60 \\ \hline  30 & 244 & 244 & 244 & 127 & 81 & 63 & 61 \\ \hline  31 & 244 & 244 & 244 & 244 & 87 & 64 & 64 \\ \hline  32 & 244 & 244 & 244 & 244 & 89 & 65 & 65 \\ \hline  33 & 244 & 244 & 244 & 244 & 103 & 70 & 66 \\ \hline \end{tabular}
\caption{Lower bounds for $\overline{S}_h(\mathbb{F}_3^r)$ for $2\leq h\leq 8$ and $4\leq r \leq 33$.}
\label{table:values_of_Sh_3}
\end{table}

\begin{table}[H]
\centering
\begin{tabular}{|c|c|c|c|c|c|c|c|}\hline
$r$ & $\overline{S}_2(\mathbb{F}_4^r)$ & $\overline{S}_3(\mathbb{F}_4^r)$ & $\overline{S}_4(\mathbb{F}_4^r)$ & $\overline{S}_5(\mathbb{F}_4^r)$ & $\overline{S}_6(\mathbb{F}_4^r)$ & $\overline{S}_7(\mathbb{F}_4^r)$ & $\overline{S}_8(\mathbb{F}_4^r)$ \\ \hline  \hline 
4 & 6 & X & X & X & X & X & X \\ \hline  5 & 12 & X & X & X & X & X & X \\ \hline  6 & 22 & 8 & X & X & X & X & X \\ \hline  7 & 44 & 10 & X & X & X & X & X \\ \hline  8 & 86 & 18 & 10 & X & X & X & X \\ \hline  9 & 172 & 22 & 11 & X & X & X & X \\ \hline  10 & 257 & 27 & 15 & 12 & X & X & X \\ \hline  11 & 257 & 43 & 19 & 13 & X & X & X \\ \hline  12 & 257 & 47 & 28 & 17 & 14 & X & X \\ \hline  13 & 257 & 71 & 29 & 18 & 15 & X & X \\ \hline  14 & 257 & 114 & 32 & 30 & 16 & 16 & X \\ \hline  15 & 257 & 123 & 43 & 31 & 19 & 17 & X \\ \hline  16 & 257 & 148 & 52 & 33 & 22 & 18 & 18 \\ \hline  17 & 257 & 257 & 66 & 36 & 25 & 21 & 19 \\ \hline  18 & 257 & 257 & 69 & 40 & 28 & 24 & 20 \\ \hline  19 & 257 & 257 & 88 & 43 & 32 & 26 & 21 \\ \hline  20 & 257 & 257 & 112 & 65 & 35 & 30 & 23 \\ \hline \end{tabular}
\caption{Lower bounds for $\overline{S}_h(\mathbb{F}_4^r)$ for $2\leq h\leq 8$ and $4\leq r \leq 20$.}
\label{table:values_of_Sh_4}
\end{table}

\begin{table}[H]
\centering
\begin{tabular}{|c|c|c|c|c|c|c|c|}\hline 
$r$ & $\overline{S}_2(\mathbb{F}_5^r)$ & $\overline{S}_3(\mathbb{F}_5^r)$ & $\overline{S}_4(\mathbb{F}_5^r)$ & $\overline{S}_5(\mathbb{F}_5^r)$ & $\overline{S}_6(\mathbb{F}_5^r)$ & $\overline{S}_7(\mathbb{F}_5^r)$ & $\overline{S}_8(\mathbb{F}_5^r)$ \\ \hline   
4 & 7 & X & X & X & X & X & X \\ \hline  5 & 13 & X & X & X & X & X & X \\ \hline  6 & 31 & 8 & X & X & X & X & X \\ \hline  7 & 45 & 12 & X & X & X & X & X \\ \hline  8 & 127 & 18 & 10 & X & X & X & X \\ \hline  9 & 131 & 28 & 12 & X & X & X & X \\ \hline  10 & 131 & 34 & 16 & 12 & X & X & X \\ \hline  11 & 131 & 46 & 22 & 13 & X & X & X \\ \hline  12 & 131 & 64 & 28 & 17 & 14 & X & X \\ \hline  13 & 131 & 126 & 33 & 20 & 15 & X & X \\ \hline  14 & 131 & 130 & 37 & 30 & 17 & 16 & X \\ \hline  15 & 131 & 131 & 49 & 31 & 21 & 17 & X \\ \hline  16 & 131 & 131 & 63 & 33 & 26 & 19 & 18 \\ \hline  17 & 131 & 131 & 67 & 37 & 27 & 22 & 19 \\ \hline  18 & 131 & 131 & 79 & 41 & 32 & 26 & 20 \\ \hline  19 & 131 & 131 & 131 & 48 & 35 & 28 & 24 \\ \hline  20 & 131 & 131 & 131 & 64 & 42 & 31 & 26 \\ \hline 
\end{tabular}
\caption{Lower bounds for $\overline{S}_h(\mathbb{F}_5^r)$ for $2\leq h\leq 8$ and $4\leq r \leq 20$.}
\label{table:values_of_Sh_5}
\end{table}

\begin{table}[H]
\centering
\begin{tabular}{|c|c|c|c|c|c|c|c|}\hline
$r$ & $\overline{S}_2(\mathbb{F}_7^r)$ & $\overline{S}_7(\mathbb{F}_7^r)$ & $\overline{S}_4(\mathbb{F}_7^r)$ & $\overline{S}_5(\mathbb{F}_7^r)$ & $\overline{S}_6(\mathbb{F}_7^r)$ & $\overline{S}_7(\mathbb{F}_7^r)$ & $\overline{S}_8(\mathbb{F}_7^r)$ \\ \hline  \hline  
4 & 9 & X & X & X & X & X & X \\ \hline  5 & 19 & X & X & X & X & X & X \\ \hline  6 & 45 & 9 & X & X & X & X & X \\ \hline  7 & 71 & 15 & X & X & X & X & X \\ \hline  8 & 101 & 22 & 10 & X & X & X & X \\ \hline  9 & 101 & 28 & 14 & X & X & X & X \\ \hline  10 & 101 & 42 & 21 & 12 & X & X & X \\ \hline  11 & 101 & 56 & 24 & 15 & X & X & X \\ \hline  12 & 101 & 101 & 29 & 19 & 14 & X & X \\ \hline  13 & 101 & 101 & 50 & 22 & 16 & X & X \\ \hline  14 & 101 & 101 & 53 & 31 & 20 & 16 & X \\ \hline  15 & 101 & 101 & 60 & 32 & 23 & 17 & X \\ \hline  16 & 101 & 101 & 101 & 52 & 33 & 21 & 18 \\ \hline  17 & 101 & 101 & 101 & 54 & 34 & 24 & 19 \\ \hline  18 & 101 & 101 & 101 & 58 & 37 & 26 & 22 \\ \hline  19 & 101 & 101 & 101 & 60 & 40 & 29 & 25 \\ \hline  20 & 101 & 101 & 101 & 101 & 54 & 33 & 28 \\ \hline  \end{tabular}\caption{Lower bounds for $\overline{S}_h(\mathbb{F}_7^r)$ for $2\leq h\leq 8$ and $4\leq r \leq 20$.}
\label{table:values_of_Sh_7}
\end{table}

\begin{table}[H]
\centering
\begin{tabular}{|c|c|c|c|c|c|c|c|}\hline
$r$ & $\overline{S}_2(\mathbb{F}_8^r)$ & $\overline{S}_3(\mathbb{F}_8^r)$ & $\overline{S}_4(\mathbb{F}_8^r)$ & $\overline{S}_5(\mathbb{F}_8^r)$ & $\overline{S}_6(\mathbb{F}_8^r)$ & $\overline{S}_7(\mathbb{F}_8^r)$ & $\overline{S}_8(\mathbb{F}_8^r)$ \\ \hline  \hline
4 & 10 & X & X & X & X & X & X \\ \hline  5 & 21 & X & X & X & X & X & X \\ \hline  6 & 59 & 10 & X & X & X & X & X \\ \hline  7 & 82 & 16 & X & X & X & X & X \\ \hline  8 & 131 & 25 & 10 & X & X & X & X \\ \hline  9 & 131 & 32 & 15 & X & X & X & X \\ \hline  10 & 131 & 75 & 21 & 12 & X & X & X \\ \hline  11 & 131 & 76 & 25 & 16 & X & X & X \\ \hline  12 & 131 & 131 & 38 & 20 & 14 & X & X \\ \hline  13 & 131 & 131 & 43 & 25 & 16 & X & X \\ \hline  14 & 131 & 131 & 64 & 30 & 21 & 16 & X \\ \hline  15 & 131 & 131 & 74 & 36 & 25 & 18 & X \\ \hline  16 & 131 & 131 & 131 & 66 & 28 & 21 & 18 \\ \hline  17 & 131 & 131 & 131 & 67 & 33 & 25 & 19 \\ \hline  18 & 131 & 131 & 131 & 70 & 37 & 29 & 23 \\ \hline  19 & 131 & 131 & 131 & 75 & 40 & 31 & 26 \\ \hline  20 & 131 & 131 & 131 & 131 & 68 & 36 & 28 \\ \hline \end{tabular}\caption{Lower bounds for $\overline{S}_h(\mathbb{F}_8^r)$ for $2\leq h\leq 8$ and $4\leq r \leq 20$.}
\label{table:values_of_Sh_8}
\end{table}

\begin{table}[H]
\centering
\begin{tabular}{|c|c|c|c|c|c|c|c|}\hline
$r$ & $\overline{S}_2(\mathbb{F}_9^r)$ & $\overline{S}_3(\mathbb{F}_9^r)$ & $\overline{S}_4(\mathbb{F}_9^r)$ & $\overline{S}_5(\mathbb{F}_9^r)$ & $\overline{S}_6(\mathbb{F}_9^r)$ & $\overline{S}_7(\mathbb{F}_9^r)$ & $\overline{S}_8(\mathbb{F}_9^r)$ \\ \hline  \hline 
4 & 11 & X & X & X & X & X & X \\ \hline  5 & 21 & X & X & X & X & X & X \\ \hline  6 & 73 & 11 & X & X & X & X & X \\ \hline  7 & 97 & 18 & X & X & X & X & X \\ \hline  8 & 131 & 23 & 11 & X & X & X & X \\ \hline  9 & 131 & 42 & 20 & X & X & X & X \\ \hline  10 & 131 & 53 & 21 & 12 & X & X & X \\ \hline  11 & 131 & 88 & 29 & 17 & X & X & X \\ \hline  12 & 131 & 131 & 42 & 21 & 14 & X & X \\ \hline  13 & 131 & 131 & 43 & 29 & 17 & X & X \\ \hline  14 & 131 & 131 & 61 & 31 & 21 & 16 & X \\ \hline  15 & 131 & 131 & 90 & 41 & 29 & 18 & X \\ \hline  16 & 131 & 131 & 131 & 42 & 31 & 22 & 18 \\ \hline  17 & 131 & 131 & 131 & 82 & 34 & 29 & 20 \\ \hline  18 & 131 & 131 & 131 & 85 & 40 & 31 & 23 \\ \hline  19 & 131 & 131 & 131 & 92 & 45 & 34 & 29 \\ \hline  20 & 131 & 131 & 131 & 131 & 84 & 37 & 31 \\ \hline   \end{tabular}\caption{Lower bounds for $\overline{S}_h(\mathbb{F}_9^r)$ for $2\leq h\leq 8$ and $4\leq r \leq 20$.}
\label{table:values_of_Sh_9}
\end{table}

\end{document}